\documentclass[1p,11pt]{elsarticle}


\usepackage{graphicx}
\usepackage{amsmath,amsxtra,amssymb,latexsym, amscd,amsthm}

\theoremstyle{plain}
\newtheorem{thm}{Theorem}[section]
\newtheorem{cor}[thm]{Corollary}

\newtheorem{lem}[thm]{Lemma}
\newtheorem{prop}[thm]{Proposition}
\theoremstyle{definition}

\theoremstyle{plain}

\theoremstyle{remark}
\newtheorem{rmk}{Remark}

 \textwidth 15truecm
\numberwithin{equation}{section}
\numberwithin{figure}{section}
\numberwithin{table}{section}

\newcommand{\M}{\operatorname{M}}
\newcommand{\Hf}{\operatorname{H}}

\bibliographystyle{elsarticle-num}

\begin{document}

\begin{frontmatter}

\title{\textbf{A $q$-enumeration of lozenge tilings of a hexagon with four adjacent triangles removed from the boundary}}

\author{Tri Lai\corref{cor1}}
\address{Department of Mathematics\\ University of Nebraska--Lincoln, Lincoln, NE 68588}
\cortext[cor1]{Corresponding author, email: tlai3@unl.edu, tel: 402-472-7001}

\begin{abstract}
MacMahon proved a simple product formula for the generating function of plane partitions fitting in a given box. The theorem implies a $q$-enumeration of lozenge tilings of  a semi-regular hexagon on the triangular lattice. In this paper we generalize MacMahon's classical theorem by $q$-enumerating lozenge tilings of a new family of hexagons with four adjacent triangles removed from their boundary.
\end{abstract}

\begin{keyword}
Graphical condensation \sep Lozenge tilings \sep Perfect matchings \sep Plane partitions
\MSC[2010] 05A15 \sep 05C30 \sep 05C70
\end{keyword}

\end{frontmatter}

\section{Introduction and main results}
Given $k$ positive integers $\lambda_1\geq \lambda_2\geq \dots \geq \lambda_k$, a \emph{plane partition} of shape $(\lambda_1,\lambda_2,\dots,\lambda_k)$ is an array of non-negative integers
\begin{center}
\begin{tabular}{rccccccccc}
$n_{1,1}$   &$n_{1,2}$                 &$n_{1,3}$               & $\dotsc$               &  $\dotsc$                        & $\dotsc$                            &   $n_{1,\lambda_1}$ \\\noalign{\smallskip\smallskip}
$n_{2,1}$   &  $n_{2,2}$              & $n_{2,3}$             &  $\dotsc$               & $\dotsc$                        &         $n_{2,\lambda_2}$&          \\\noalign{\smallskip\smallskip}
$\vdots$    &       $\vdots$            & $\vdots$                &        $\vdots$         &     \reflectbox{$\ddots$\quad}               &    &              \\\noalign{\smallskip\smallskip}
 $n_{k,1}$  &  $n_{k,2}$               & $n_{k,3}$              &     $\dotsc$             &   $n_{k,\lambda_k}$ &                                          &           \\\noalign{\smallskip\smallskip}
\end{tabular}
\end{center}
so that $n_{i,j}\geq n_{i,j+1}$ and $n_{i,j}\geq n_{i+1,j}$ (i.e. all rows and all columns are weakly decreasing from left to right and from top to bottom, respectively). The sum of all entries of a plane partition $\pi$ is called the \emph{volume} (or the \emph{norm}) of the plane partition, and denoted by $|\pi|$.

The plane partitions of rectangular shape $(b,b,\dots,b)$ ($a$ rows) with entries at most $c$ are usually identified with their 3-D interpretations --- piles (or stacks) of unit cubes fitting in an $a\times b\times c$ box. The latter are in bijection with \emph{lozenge tilings} of a semi-regular hexagon $Hex(a,b,c)$  of side-lengths $a,b,c,a,b,c$ (in clockwise order, starting from the northwest side\footnote{From now on, we always list the side-lengths of a hexagon on the triangular lattice in the clockwise order, starting from the northwest side.}) on the triangular lattice. Here, a \emph{lozenge} (or \emph{unit rhombus}) is union of any two unit equilateral triangles sharing an edge; and a \emph{lozenge tiling} of a region is a covering of the region by lozenges so that there are no gaps or overlaps.

Let $q$ be an indeterminate. The \emph{$q$-integer} $[n]_q$ is defined by $[n]_q:=1+q+q^2+\dotsc+q^{n-1}$. We also define the \emph{$q$-factorial} by $[n]_q!:=[1]_q\cdot[2]_q\dotsc[n]_q$, and  the \emph{$q$-hyperfactorial} function by $\Hf_q(n):=[0]_q!\cdot[1]_q!\cdot[2]_q!\dotsc[n-1]_q!$. MacMahon \cite{Mac} proved that the volume generating function of the plane partitions fitting in an $a\times b \times c$ box is given by
\begin{equation}\label{qMac}
\sum_{\pi}q^{|\pi|}=\frac{\Hf_q(a)\Hf_q(b)\Hf_q(c)\Hf_q(a+b+c)}{\Hf_q(a+b)\Hf_q(b+c)\Hf_q(c+a)},
\end{equation}
where the sum on the left-hand side is taken over all plane partitions $\pi$ fitting in an $a\times b\times c$ box.
By specializing $q=1$ in the MacMahon formula (\ref{qMac}), it follows that the number of lozenge tilings of a semi-regular hexagon $Hex(a,b,c)$ is equal to
\begin{equation}\label{Mac}
\frac{\Hf(a)\Hf(b)\Hf(c)\Hf(a+b+c)}{\Hf(a+b)\Hf(b+c)\Hf(c+a)},
\end{equation}
where $\Hf(n):=\Hf_1(n)=0!\cdot1!\cdot2!\dotsc(n-1)!$ is the ordinary hyperfactorial function.

The tiling formula (\ref{Mac}) inspired a large body of work focusing on enumeration of lozenge tilings of a hexagon with \emph{dents} or \emph{holes} (see e.g. \cite{Ciucu3}, \cite{CF14}, \cite{CEKZ}, \cite{CK}, \cite{Eisen}, \cite{Eisen2}, \cite{FK1}, \cite{FK2}, \cite{KO}, \cite{Lai16}, \cite{Ros}). Here, a \emph{dent} is a portion of the hexagon that has been removed from the boundary, while a \emph{hole} is a portion removed from inside the hexagon. 


\begin{figure}\centering
\includegraphics[width=6cm]{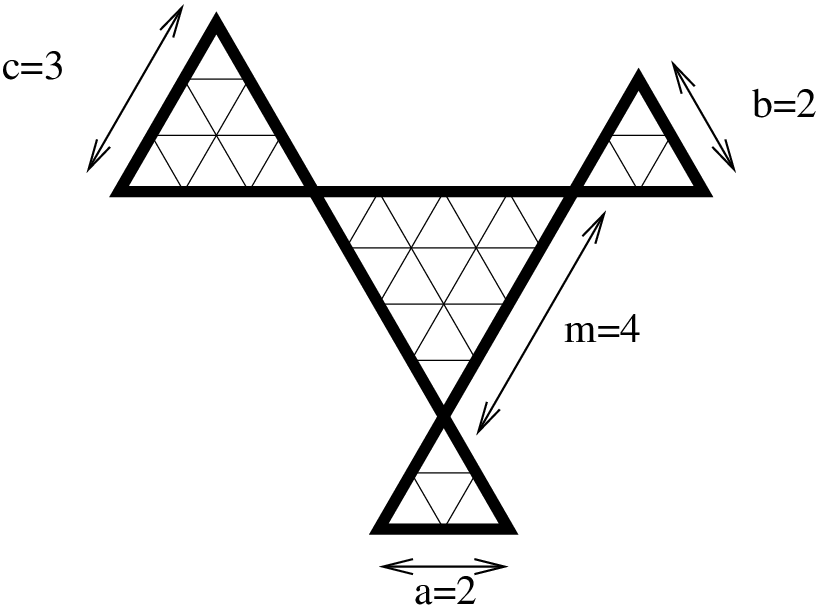}
\caption{The shamrock $S_{4,2,2,3}$.}
\label{shamrock2}
\end{figure}

In this paper, we consider a new type of dented hexagon as follows. The \emph{shamrock}\footnote{The \emph{shamrock} was first introduced by Ciucu and Krattenthaler in \cite{CK}.} $S_{m,a,b,c}$ is the union of four equilateral triangles with sides $m$, $a$, $b$, $c$ on the triangular lattice described in  Figure \ref{shamrock2}.  We start with a hexagon of side-lengths $z+a+b+c, x+y+m, t+a+b+c, z+m, x+y+a+b+c, t+m$, where $x,y,z,t$ are four non-negative integers. Next, we remove a shamrock $S_{m,a,b,c}$ from the base of the hexagon so that the lower-left vertex of the $a$-triangle in the shamrock is $x+c$ units to the right of the lower-left vertex of the hexagon. We denote by $Q\begin{pmatrix}x&y&z&t\\m&a&b&c\end{pmatrix}$ the resulting region. Figure \ref{shamrock} shows the region $Q\begin{pmatrix}4&3&2&3\\4&3&2&1\end{pmatrix}$.

\begin{figure}\centering
\includegraphics[width=8cm]{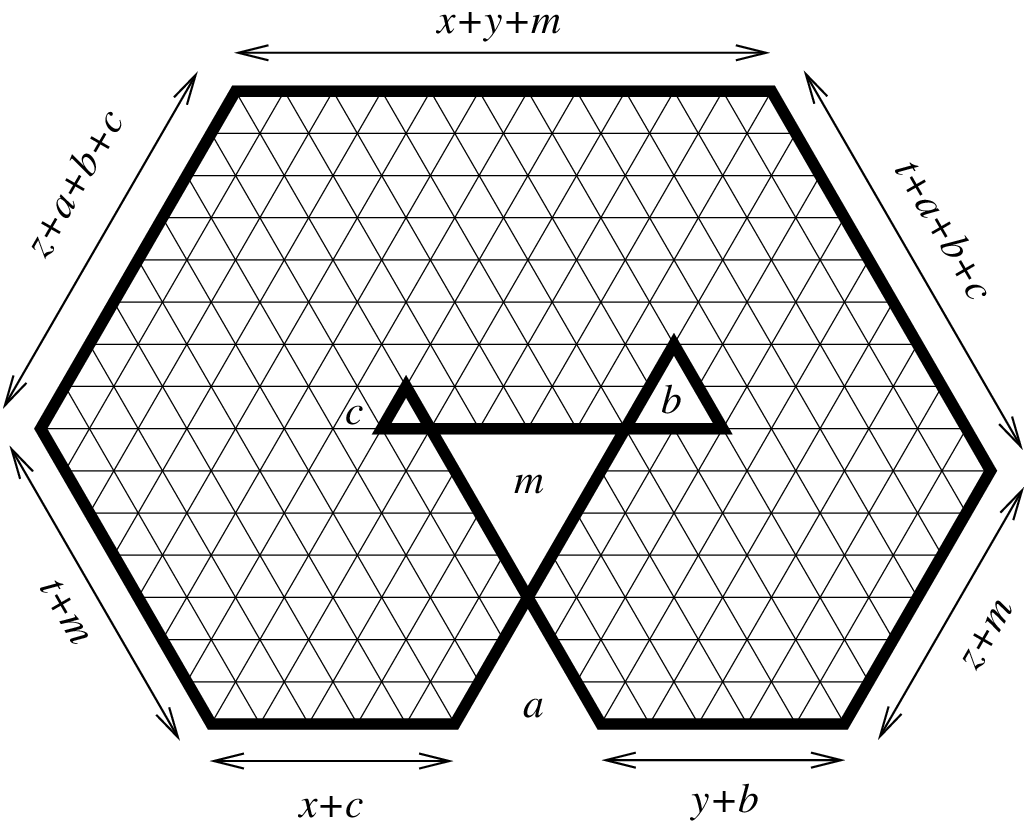}
\caption{Hexagon with a shamrock removed along the boundary.}
\label{shamrock}
\end{figure}

Our following main theorem shows that the lozenge tilings of a $Q$-type region are always enumerated by a simple product formula.
\begin{thm}\label{main}
For non-negative integers $x,y,z,t,m,a,b,c$, the number of lozenge tilings of the region $Q\begin{pmatrix}x&y&z&t\\m&a&b&c\end{pmatrix}$ is equal to
   \begin{align}\label{maineq}
 &\frac{\Hf(m+a+b+c+x+y+z+t)}{\Hf(m+a+b+c+x+y+t)\Hf(m+a+b+c+x+y+z)}\notag\\
 &\times\frac{\Hf(m+a+b+c+x+t)\Hf(m+a+b+c+x+y)\Hf(m+a+b+c+y+z)}{\Hf(m+a+b+c+z+t)\Hf(m+a+b+c+x)\Hf(m+a+b+c+y)}\notag\\
 &\times \frac{\Hf(x)\Hf(y)\Hf(z)\Hf(t)}{\Hf(x+t)\Hf(y+z)}\frac{\Hf(m)^3\Hf(a)^2\Hf(b)\Hf(c)\Hf(m+a+b+c)}{\Hf(m+a)^2\Hf(m+b)\Hf(m+c)}\notag\\
 &\times\frac{\Hf(m+b+c+z+t)\Hf(m+a+c+x)\Hf(m+a+b+y)}{\Hf(m+b+y+z)\Hf(m+c+x+t)}\notag\\
  &\times\frac{\Hf(c+x+t)\Hf(b+y+z)}{\Hf(a+c+x)\Hf(a+b+y)\Hf(b+c+z+t)}.
 \end{align}
\end{thm}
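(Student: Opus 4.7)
The paper's keywords ``perfect matchings'' and ``graphical condensation'' strongly suggest that the main engine is Kuo's condensation identity applied to the dual graph, combined with induction on the boundary parameters $x,y,z,t$, with the shamrock parameters $m,a,b,c$ held fixed throughout. My plan is to show that both sides of \eqref{maineq} obey the same Kuo-type recurrence and agree on a tractable set of base cases.

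The first step is to replace lozenge tilings by perfect matchings of the dual graph $G$ of $Q\begin{pmatrix}x&y&z&t\\m&a&b&c\end{pmatrix}$, whose vertices are the unit triangles and whose edges connect pairs that share a side. One then selects four vertices $\alpha,\beta,\gamma,\delta$ on the outer face (typically near the four ``free'' hexagon corners determined by $x,y,z,t$) and applies Kuo's identity
\[
\M(G)\,\M(G-\{\alpha,\beta,\gamma,\delta\}) = \M(G-\{\alpha,\beta\})\,\M(G-\{\gamma,\delta\}) + \M(G-\{\alpha,\delta\})\,\M(G-\{\beta,\gamma\}).
\]
Removing an appropriate pair of such vertices forces a strip of lozenges that increments or decrements one of $x,y,z,t$ by one unit, so Kuo's identity becomes a recurrence expressing $\M(Q)$ in terms of six regions of the same shape with shifted boundary parameters. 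A direct manipulation of the $\Hf$-quotients then shows that the product on the right-hand side of \eqref{maineq} satisfies precisely the same recurrence, which closes the induction step.

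For the base cases, when any one of $x,y,z,t$ vanishes, a triangular strip of forced tiles peels off and the region either reduces to a smaller member of the same family or collapses further. If the shamrock then touches the boundary in a degenerate way, one is left with a hexagon having a single triangular notch on its base, enumerated by Proposition~2.1 of \cite{CLP}, or with a plain semi-regular hexagon, enumerated by MacMahon's formula \eqref{Mac}. The ``deep'' base case $a=b=c=0$ collapses the shamrock to a single size-$m$ triangle on the boundary, which is again handled by the CLP formula; a second condensation with the four vertices rotated (so that a different boundary parameter is incremented) will likely be needed to cover the combinations of parameters that the primary recurrence alone does not reach.

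The main obstacle will be the geometric choice of $\alpha,\beta,\gamma,\delta$: they must be placed so that all six regions arising on the two sides of Kuo's identity still belong to the family $Q\begin{pmatrix}\cdot&\cdot&\cdot&\cdot\\m&a&b&c\end{pmatrix}$ after stripping forced lozenges, since a careless choice yields regions of irregular shape for which no product formula is available and the induction breaks down. Once suitable four-tuples (quite possibly two different ones for two complementary recurrences) are identified, the verification that \eqref{maineq} satisfies the corresponding recurrences is a routine though lengthy hyperfactorial computation, and the remainder of the proof is a systematic walk through the base cases.
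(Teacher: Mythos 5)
Your overall engine---Kuo condensation on the dual graph together with induction on the boundary parameters $x,y,z,t$ while $m,a,b,c$ stay fixed, then checking that the hyperfactorial product satisfies the same recurrence---is exactly the skeleton of the paper's argument; the paper runs this induction (on $y+z+t$) for the $q$-weighted version, Theorem~\ref{qmain}, and obtains Theorem~\ref{main} by setting $q=1$. Moreover a single well-chosen four-tuple suffices: the paper's choice produces the recurrence linking the regions with parameters $(x,y,z,t)$, $(x,y-1,z,t-1)$, $(x,y-1,z,t)$, $(x,y,z,t-1)$, $(x,y-1,z+1,t-1)$, $(x,y,z-1,t)$, all in the same $Q$-family after stripping forced lozenges, so no second ``rotated'' condensation is needed.

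The genuine gap is in your base cases. When one of $x,y,z,t$ vanishes, the region does \emph{not} collapse to a hexagon with a single triangular notch (the situation covered by Proposition 2.1 of \cite{CLP}) nor to a plain semi-regular hexagon: after splitting off a forced hexagonal block by the region-splitting lemma and removing forced lozenges, what remains is a \emph{magnet bar} region---a hexagon with a hole consisting of \emph{two} triangles meeting at a point (the $b=c=0$ member of the $Q$-family)---and both hole parameters are still general, precisely because $m,a,b,c$ are held fixed in your induction. For example, $x=0$ yields $Hex(a,c,m)$ times the magnet bar $B_{b,m}(a,t+c,z,y)$. Enumerating magnet bars is itself a nontrivial product-formula theorem: at $q=1$ it is Theorem 3.1 of \cite{CK}, and in the paper it is (re)proved in $q$-form by a separate Kuo-condensation induction whose own base cases are the single-notch regions $K_a(x,y,z,t)$, handled through semihexagons with dents; that is where the \cite{CLP}-type input actually enters, one level down. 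Without either invoking \cite{CK} or carrying out this subsidiary induction, your induction does not close. Relatedly, your proposed ``deep base case $a=b=c=0$'' plays no role in an induction in which the shamrock parameters are never decreased; its appearance signals that the true shape of the boundary-parameter-zero regions (two-triangle holes, not one) was not identified, and this is the missing ingredient to repair.
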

One readily sees that, by letting $a=b=c=m=0$, our $Q$-type region becomes a semi-regular hexagon. In this sense, Theorem \ref{main} is a generalization of MacMahon's tiling formula (\ref{Mac}).

\medskip

If we assign to each lozenge tiling $T$ of a semi-regular hexagon $Hex(a,b,c)$ a weight $q^{|\pi|}$, where $\pi$ is the plane partition corresponding to $T$, the generating function on the left-hand side of (\ref{qMac}) becomes a weighted sum of the lozenge tilings of $Hex(a,b,c)$. In this sense, MacMahon's formula yields a weighted enumeration of lozenge tilings of a semi-regular hexagon. This weighted enumeration is usually called a \emph{$q$-enumeration}, since the tiling weights here are all powers of $q$. While there are many known tiling enumerations of hexagons with dents or holes, most of them only concern `plain tilings' (i.e. tilings without weight). Only a few weighted enumerations have been found (see e.g. \cite{Mac}, \cite{Stanley2}, \cite[pp.374--375]{Stanley}, \cite{Lai16}). In the next part of this section, we consider such a rare  weighted enumeration, that is a $q$-analog of the result in Theorem \ref{main} (see Theorem \ref{qmain}).

\medskip

Similar to the bijection between lozenge tilings of a semi-regular hexagon $Hex(a,b,c)$ and plane partitions fitting in an $a\times b\times c$ box, one can view a lozenge tiling of $Q\begin{pmatrix}x&y&z&t\\m&a&b&c\end{pmatrix}$ as a pile of unit cubes fitting in a \emph{compound box} $\mathcal{B}:=\mathcal{B}\begin{pmatrix}x&y&z&t\\m&a&b&c\end{pmatrix}$, which is the union of 6 adjacent rectangular boxes (see Figure \ref{boxcombined}(a) for the case when $x=y=z=3, t=4, m=3, a=b=c=2$). Figure \ref{boxcombined}(b) gives a 3-D picture of the compound box $\mathcal{B}$ by showing the empty pile; the bases of the rectangular boxes in $\mathcal{B}$ consist of right-tilted lozenges and are labeled by $1,2,\dotsc,6$.

 \begin{figure}\centering
\includegraphics[width=14cm]{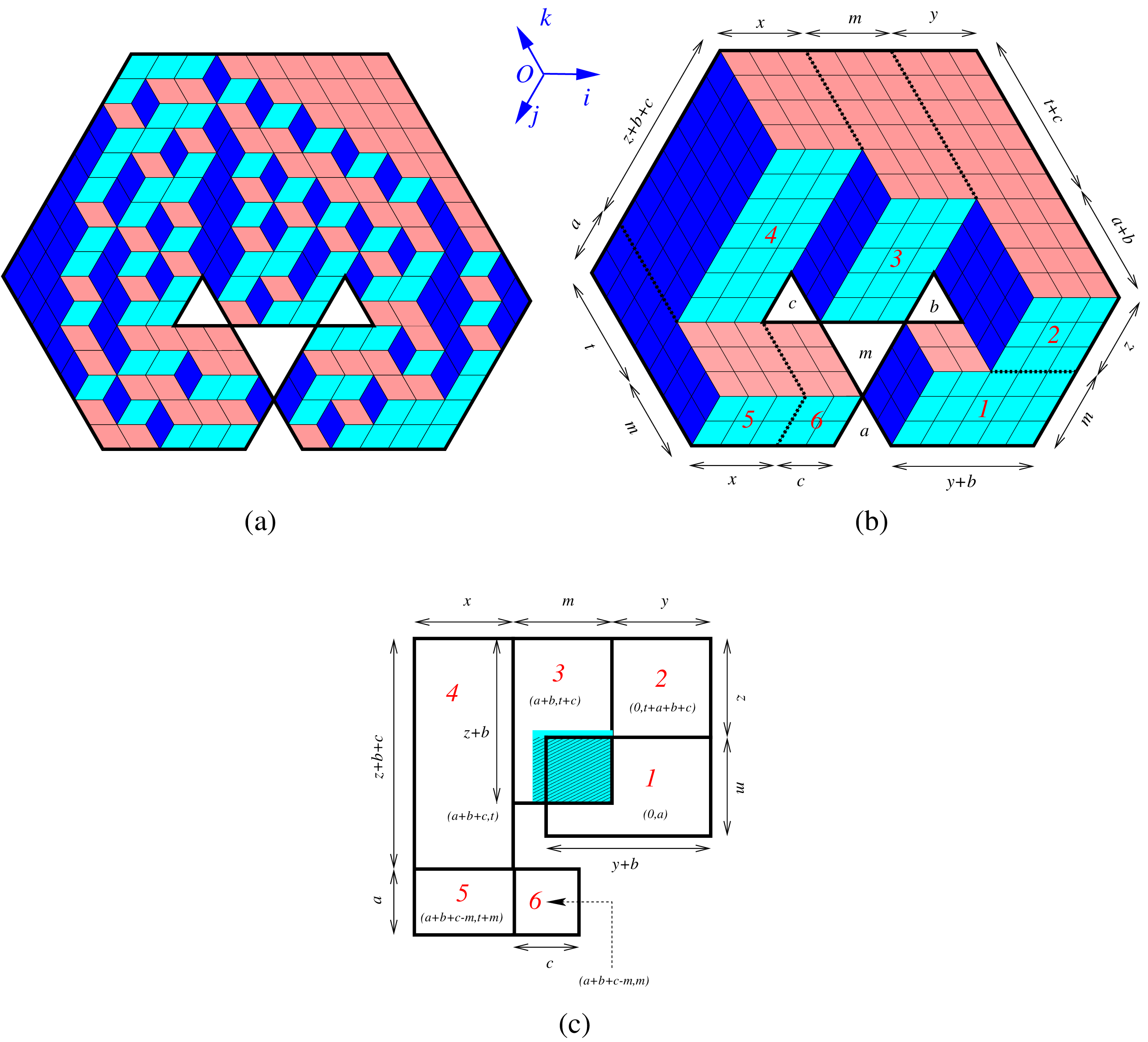}
\caption{(a) Viewing a lozenge tiling of a $Q$-type region as a pile of unit cubes fitting in a special box. (b) The tiling corresponding to the empty pile. (c) The projection of the compound box $\mathcal{B}$ on the $\textbf{Oij}$ plane. }
\label{boxcombined}
\end{figure}


Projecting the compound box $\mathcal{B}$ on the $\textbf{Oij}$ plane, we get a projective diagram as in Figure \ref{boxcombined}(c). In this diagram, each rectangular box in $\mathcal{B}$ is represented by a rectangle with a pair of integers $(a,b)$, where $a$ is the level of the base and $b$ is the height of the box. We always assume that the base of the box $1$ is on level $0$. We also note that the rectangles corresponding to the boxes 1 and 3 are overlapping (the intersection is indicated by the shaded area in Figure \ref{boxcombined}(c)). However, these two boxes themselves are \emph{not} overlapping, since the box $3$ is hanging over the box 1. 

We call the piles of unit cubes fitting in the compound box $\mathcal{B}$ \emph{generalized plane partitions}, since they have a similar monotonicity as (the 3D-interpretation of) the ordinary plane partitions: the tops of their columns (of unit cubes) are weakly decreasing along $\overrightarrow{\textbf{Oi}}$ and $\overrightarrow{\textbf{Oj}}$. 

Similar to MacMahon's theorem (\ref{qMac}), we have a closed form product formula for the volume generating function of the generalized plane partitions.

\begin{thm}\label{qmain} Let $m,a,b,c,x,y,z,t$ be non-negative integers. Then
   \begin{align}\label{mainequation}
 \sum_{\pi}q^{|\pi|}&= \frac{\Hf_q(m+a+b+c+x+y+z+t)}{\Hf_q(m+a+b+c+x+y+t)\Hf_q(m+a+b+c+x+y+z)}\notag\\
 &\times\frac{\Hf_q(m+a+b+c+x+t)\Hf_q(m+a+b+c+x+y)\Hf_q(m+a+b+c+y+z)}{\Hf_q(m+a+b+c+z+t)\Hf_q(m+a+b+c+x)
 \Hf_q(m+a+b+c+y)}\notag\\
 &\times \frac{\Hf_q(x)\Hf_q(y)\Hf_q(z)\Hf_q(t)}{\Hf_q(x+t)\Hf_q(y+z)}\frac{\Hf_q(m)^3\Hf_q(a)^2\Hf_q(b)\Hf_q(c)\Hf_q(m+a+b+c)}
 {\Hf_q(m+a)^2\Hf_q(m+b)\Hf_q(m+c)}\notag\\
 &\times\frac{\Hf_q(m+b+c+z+t)\Hf_q(m+a+c+x)\Hf_q(m+a+b+y)}{\Hf_q(m+b+y+z)\Hf_q(m+c+x+t)}\notag\\
  &\times\frac{\Hf_q(c+x+t)\Hf_q(b+y+z)}{\Hf_q(a+c+x)\Hf_q(a+b+y)\Hf_q(b+c+z+t)},
 \end{align}
 where the sum on the left-hand side is taken over all generalized plane partitions $\pi$ fitting in the compound box $\mathcal{B}\begin{pmatrix}x&y&z&t\\m&a&b&c\end{pmatrix}$, and where $|\pi|$ is the volume of $\pi$ (i.e. the number of unit cubes in $\pi$).
\end{thm}

Denote by $F\left(\begin{matrix}x&y&z&t\\m&a&b&c\end{matrix}\ ;\  q\right)$ the volume generating function on the left-hand side of (\ref{mainequation}). One readily sees that $F\left(\begin{matrix}x&y&z&t\\m&a&b&c\end{matrix}\ ;\  1\right)$ is exactly the number of lozenge tilings of the region $Q\begin{pmatrix}x&y&z&t\\m&a&b&c\end{pmatrix}$ and that Theorem \ref{qmain} implies Theorem \ref{main} by specializing $q=1$. 

We notice that the total volume of the compound box $\mathcal{B}\begin{pmatrix}x&y&z&t\\m&a&b&c\end{pmatrix}$, and hence the degree of the volume generating function
$F\left(\begin{matrix}x&y&z&t\\m&a&b&c\end{matrix}\ ;\  q\right)$  in $q$, is
\begin{equation}m(b + y)a + zy(t + a + b + c) + (b + z)m(c + t) + (z + b + c)xt + ax(t + m) + acm.\end{equation}

We would also like to point out that taking the complement of a generalized plane partition with respect to the compound box provides a natural involution on the set of all generalized plane partitions in this box, and that this gives the symmetry
\begin{align} q^{m(b+y)a+zy(t+a+b+c)+(b+z)m(c+t)+(z+b+c)xt+ax(t+m)+acm} &F\left(\begin{matrix}x&y&z&t\\m&a&b&c\end{matrix}\ ;\  q^{-1}\right) = \notag\\
&\qquad F\left(\begin{matrix}x&y&z&t\\m&a&b&c\end{matrix}\ ;\  q\right). \end{align}

\begin{figure}\centering
\includegraphics[width=6cm]{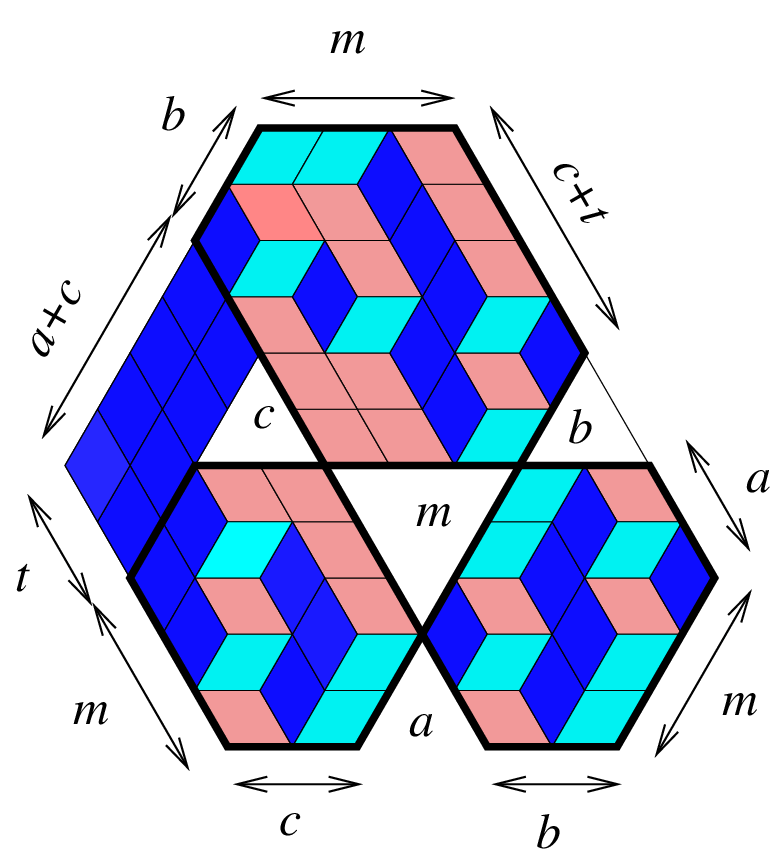}
\caption{Partitioning the compound box into three rectangular boxes in the case of $x=y=z=0$.}\label{threebox}
\end{figure}

One readily sees that MacMahon's classical theorem  (\ref{qMac}) is a special case of Theorem \ref{qmain}, when $m=a=b=c=0$, i.e.
\begin{equation}F\left(\begin{matrix}x&y&z&t\\0&0&0&0\end{matrix}\ ;\  q\right) =P_q(z,x+y,t),\end{equation}
where $P_q(a,b,c)$ denotes the MacMahon generating function in (\ref{qMac}). In addition to the above `natural' reduction, the opposite extreme case of $x = y = z = 0$ is also interesting, since it gives a reduction to a product of three MacMahon generating functions, i.e.,
\begin{equation}\label{threeboxeq}F\left(\begin{matrix}0&0&0&t\\m&a&b&c\end{matrix}\ ;\  q\right) =P_q(a,c,m)P_q(b,m,c+t)P_q(m,b,a).\end{equation}
In this case, the compound box $\mathcal{B}\begin{pmatrix}0&0&0&t\\m&a&b&c\end{pmatrix}$ is a union of three face-disjoint component boxes: the box $1$, the box $3$, and the box $6$, as indexed in Figure \ref{boxcombined} (the three other boxes are now empty). Therefore, each generalized plane partition fitting in the compound box is now a union of three independent piles of unit cubes fitting in the above three component boxes (see the three boxes corresponding the hexagons with the bold boundary in Figure \ref{threebox}). The identity (\ref{threeboxeq}) then follows.  Hence, the main result of this paper can also be regarded as a generalization of three simultaneous instances of MacMahon's theorem. The unweighted version of (\ref{threeboxeq}) (when $t=0$) also applies to the main result of Ciucu--Krattenthaler \cite{CK}, as pointed out in their Remark 1 and Figure 8.

It is worth noticing that Ciucu and Krattenthaler \cite{CK} proved a closed form product formula for the number of tilings of a hexagon with a shamrock hole in the center. However, there are \emph{not} any $q$-enumerations presented in \cite{CK}.

\medskip

The goal of the present paper is proving Theorem \ref{qmain} by using the graphical condensation method first introduced by Eric H. Kuo in \cite{Kuo04}. This condensation can be viewed as a combinatorial interpretation of Dodgson condensation (which is based on the Jacobi-Desnanot identity, see e.g. \cite{Dod} and \cite{Mui}, pp. 136--148). We refer the reader to e.g. \cite{Ciucu}, \cite{Ful}, \cite{Kuo06}, \cite{speyer}, \cite{YYZ}, \cite{YZ} for various aspects and generalizations of the method; and e.g. \cite{CF14}, \cite{CF15}, \cite{CK}, \cite{CL}, \cite{KW}, \cite{Lai15a}, \cite{Lai15b}, \cite{Lai15c}, \cite{Lai16}, \cite{Lai17}, \cite{LM}, \cite{LMNT}, \cite{Ranjan1}, \cite{Ranjan2}, \cite{Zhang}  for recent applications of Kuo condensation.

The rest of our paper is organized as follows. For ease of reference, we quote several preliminary results in Section 2, including the particular version of Kuo condensation employed in our proofs. In order to apply Kuo condensation to our $Q$-type regions, we consider several simple weight assignments on the lozenges of the regions in Section 3.   Section 4 is devoted to two generalizations of related work of Ciucu and Krattenthaler in \cite[Theorem 3.1]{CK} about the \emph{magnet bar region} (the $b=c=0$ specialization of a $Q$-type region).  Finally, we prove Theorem \ref{qmain} in Section 5.

\section{Preliminaries}

Let $G$ be a finite simple graph without loops. A \emph{perfect matching} of $G$ is a collection of edges covering each vertex of $G$ exactly once. Let $R$ be a \emph{region}\footnote{From now on, we use the word \emph{region} to mean a finite connected region on the triangular lattice.}. The \emph{(planar) dual graph} of $R$ is the graph whose vertices are unit triangles in $R$ and whose edges connect precisely two unit triangles sharing an edge. One can identify the lozenge tilings of $R$ with the perfect matchings of its dual graph.

For a weighted graph $G$, we define the \emph{matching generating function} $\M(G)$ of $G$ to be the sum of the weights of all perfect matchings in $G$, where the \emph{weight} of a perfect matching is the product of weights of its edges. If the lozenges of a region $R$ are weighted, we define similarly the \emph{tiling generating function} $\M(R)$ of $R$. In the weighted case, each edge of the dual graph $G$ of the region $R$ carries the same weight as its corresponding lozenge in $R$.

The following condensation theorem by  Kuo is the key for our proofs.
\begin{thm}[Theorem 5.1 in \cite{Kuo04}]\label{kuothm}
Let $G=(V_1,V_2,E)$ be a (weighted) bipartite planar graph in which $|V_1|=|V_2|$. Assume that  $u, v, w, s$ are four vertices appearing in a cyclic order on a face of $G$, such that $u,w \in V_1$ and $v,s \in V_2$. Then
\begin{equation}\label{kuoeq}
\M(G)\M(G-\{u, v, w, s\})=\M(G-\{u, v\})\M(G-\{ w, s\})+\M(G-\{u, s\})\M(G-\{v, w\}).
\end{equation}
\end{thm}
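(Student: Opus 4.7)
The plan is to prove Theorem \ref{qmain} inductively via Kuo's graphical condensation (Theorem \ref{kuothm}), paralleling the $q=1$ argument for Theorem \ref{main} but carrying $q$-weights throughout. First I would interpret the generating function as a weighted tiling sum: assign to each lozenge of $Q\begin{pmatrix}x&y&z&t\\m&a&b&c\end{pmatrix}$ a weight in $\{1,q,q^2,\dots\}$ so that the weighted tiling generating function $\M_q(Q)$ equals $\sum_\pi q^{|\pi|}$. Using the bijection that identifies a tiling with a pile of unit cubes in the compound box $\mathcal{B}$, the horizontal lozenges correspond to tops of vertical stacks and may be weighted by $q$ raised to the stack height; this is essentially the assignment the paper sets up in Section 3. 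Once such a weighting is fixed, the theorem reduces to a product formula for $\M_q(Q)$.

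Next I apply Theorem \ref{kuothm} to the dual graph of $Q$. The delicate step is the choice of the four unit triangles $u,v,w,s$: they must sit on a common face in the required cyclic/bipartite order, and each of the five deleted subregions produced by Kuo's identity---after stripping off forced lozenges---must again be a $Q$-type region whose parameters differ from $(x,y,z,t)$ by $\pm 1$ in one or two slots, with $m,a,b,c$ unchanged. Natural candidates are ``corner'' triangles of the hexagon far from the shamrock hole, e.g.\ placed at two opposite corners along the top and two along the sides. Each removal forces an explicit chain of lozenges along the boundary, contributing an exact power of $q$ that must be tracked.

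I would then induct on $x+y+z+t$. The boundary cases are handled separately: the case $m=a=b=c=0$ collapses the shamrock and recovers MacMahon's identity \eqref{qMac}, while the case $b=c=0$ is precisely the magnet bar region whose $q$-enumeration is to be established in Section 4 by a parallel Kuo argument. The degenerate cases where one of $x,y,z,t$ vanishes are treated by observing that a strip of lozenges along the side of the shamrock becomes forced, reducing $Q$ either to a magnet bar region or to another $Q$-region with smaller parameters. Granted the Kuo recurrence and these base cases, it remains to check that the closed-form right-hand side of Theorem \ref{qmain} satisfies the same recurrence; after cancelling common $q$-hyperfactorial factors this reduces to a short identity of the form $[A]_q[B]_q+q^{\alpha}[C]_q[D]_q=q^{\beta}[E]_q[F]_q$, provable by expanding $q$-integers.

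The main obstacle will be selecting $u,v,w,s$: the shamrock hole breaks the hexagonal symmetry, so a naive choice easily yields one of the five subregions that does not belong to the $Q$-family, derailing the induction. I expect one must test several configurations until the five outputs of Kuo's identity all land in the $Q$-family with coordinated shifts in $(x,y,z,t)$. A secondary but purely mechanical difficulty is the accounting of forced-lozenge weights, since the $q$-power contributed by each forced chain must line up precisely with the difference of $q$-hyperfactorials predicted by the product formula; any miscount invalidates the inductive step.
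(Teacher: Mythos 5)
Your proposal does not address the statement you were asked to prove. The statement is Kuo's graphical condensation identity itself: for a weighted bipartite planar graph $G=(V_1,V_2,E)$ with $|V_1|=|V_2|$ and four vertices $u,v,w,s$ in cyclic order on a face with $u,w\in V_1$ and $v,s\in V_2$, one has $\M(G)\M(G-\{u,v,w,s\})=\M(G-\{u,v\})\M(G-\{w,s\})+\M(G-\{u,s\})\M(G-\{v,w\})$. What you have written is instead an outline of how to \emph{apply} this identity to establish the paper's main result (Theorem \ref{qmain}); you take the identity for granted throughout and never engage with why it holds. That is a category error, not a fixable gap: nothing in your sketch (the $q$-weight assignments, the choice of boundary triangles, the induction on $x+y+z+t$, the hyperfactorial bookkeeping) bears on the combinatorics of perfect matchings of a general bipartite planar graph.

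For the record, the paper itself quotes this theorem from Kuo's work without reproving it. A genuine proof proceeds by superposition of matchings: take a perfect matching $M_1$ of $G$ and a perfect matching $M_2$ of $G-\{u,v,w,s\}$ and overlay them. The symmetric difference decomposes into doubled edges, alternating cycles, and exactly two alternating paths whose endpoints are $u,v,w,s$; the bipartition forces each path to join a vertex of $V_1$ to a vertex of $V_2$, and planarity together with the hypothesis that the four vertices lie in cyclic order on a common face forces the pairing to be either $\{u,v\},\{w,s\}$ or $\{u,s\},\{v,w\}$ (the crossing pairing $\{u,w\},\{v,s\}$ is excluded by the bipartition classes). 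Swapping the edges of $M_1$ and $M_2$ along these paths then gives a weight-preserving bijection onto the disjoint union of the two products on the right-hand side. If you want to prove the statement as posed, that is the argument you need to write out.
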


A \emph{forced lozenge} of a region $R$ is a lozenge that is contained in every tiling of $R$. Assume that we removed several forced lozenges $\ell_1,\ell_2,\dotsc,\ell_n$ from the region $R$, and denote by $R'$ the resulting region. Then one clearly has
\begin{equation}\label{forceeq}
\M(R)=\M(R')\prod_{i=1}^{n}wt(\ell_i),
\end{equation}
where $wt(\ell_i)$ denotes the weight of the lozenge $\ell_i$.

If a region $R$ admits a lozenge tiling, then the number of up-pointing unit triangles equals the number of down-pointing unit triangles in $R$. If a region satisfies the latter balancing condition, we say that the region is \emph{balanced}. The following lemma can be considered as a generalization of the identity (\ref{forceeq}).

\begin{lem}[Region-splitting Lemma]\label{GS}
Let $R$ be a balanced region. Assume that a subregion $S$ of $R$ satisfies the following two conditions:
\begin{enumerate}
\item[(i)] \text{\rm{(Separating Condition)}} There is only one type of unit triangle (up-pointing or down-pointing) running along each side of the border between $S$ and $R-S$.

\item[(ii)] \text{\rm{(Balancing Condition)}} $S$ is balanced.
\end{enumerate}
Then
\begin{equation}\label{GSeq}
\M(R)=\M(S)\, \M(R-S).
\end{equation}
\end{lem}
\begin{proof}
Assume there is a tiling of $R$ which contains boundary-crossing lozenges between $S$ and $R - S$ (i.e., lozenges which consist of a unit triangle from the boundary of $S$ and a unit triangle from the boundary of $R - S$). Since there is only one type of unit triangle on each side of the boundary between $S$ and $R - S$, and since $S$ and $R-S$ are balanced, the regions obtained by removing such boundary-crossing lozenges would no longer be balanced, and hence would have no tilings. Therefore, there can not be any boundary-crossing lozenges, and $S$ and $R - S$ must be tiled independently, giving the factorization (\ref{GSeq}).
\end{proof}

\section{$q$-weight assignments}

Lozenges in a region $R$ come with three different orientations: left, right, and vertical lozenges (see Figure \ref{rhumbustype}). Next, we consider three simple $q$-weight assignments of lozenges in our region $Q:=Q\begin{pmatrix}x&y&z&t\\m&a&b&c\end{pmatrix}$ as follows.

View a lozenge tiling $T$ of the region $Q$ as a generalized plane partition (i.e. a pile of unit cubes); each right lozenge is viewed as the top of a column of unit cubes. We assign to each right lozenge the weight $q^x$, where $x$ is the number of unit cubes in the corresponding column. All left and vertical lozenges are weighted by $1$. We call this assignment the \emph{natural q-weight assignment} of $Q$, and use the notation $wt_0$ for the assignment (see Figure \ref{weight1}(a) for the case when $x=y=z=3$, $t=4$, $m=3$, $a=b=c=2$).

\begin{figure}\centering
\includegraphics[width=7cm]{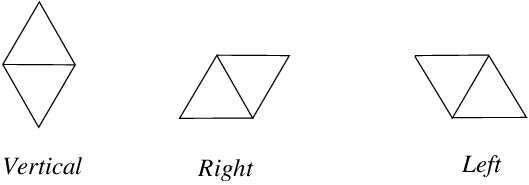}
\caption{Three orientations of lozenges.}
\label{rhumbustype}
\end{figure}

\begin{figure}\centering
\includegraphics[width=12cm]{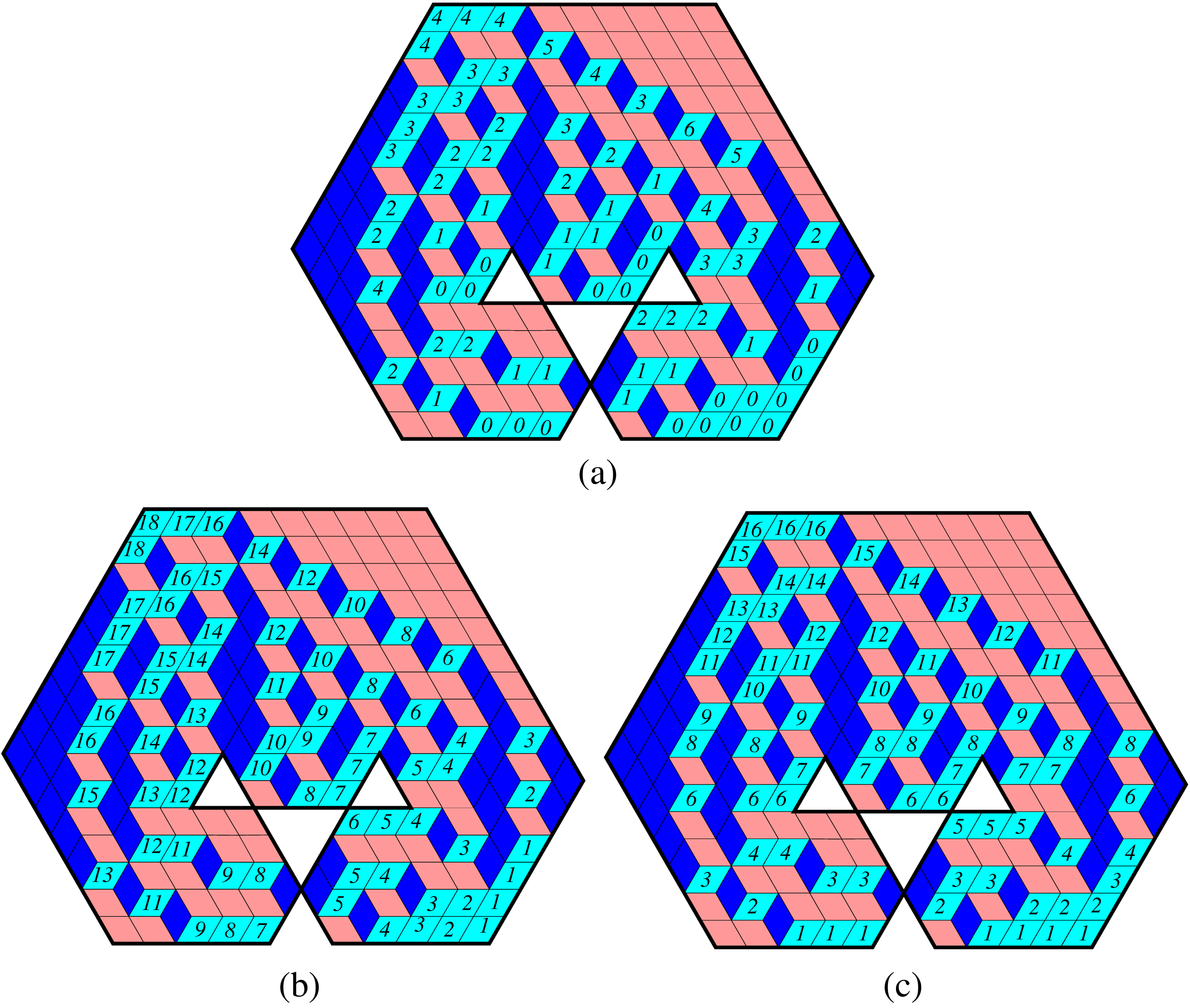}
\caption{Three $q$-weight assignments on a sample tiling of region $Q$: (a) $wt_0$, (b) $wt_1$, (c) $wt_2$. The right lozenges with label $x$ are weighted by $q^x$.}
\label{weight1}
\end{figure}

Besides the natural $q$-weight assignment $wt_0$, we consider the following two $q$-weight assignments:
\begin{enumerate}
\item[(1)] \emph{Assignment 1.}  The weights of left and vertical lozenges  are all 1. The weight of a right lozenge is $q^l$, where $l$
 is the distance between the left side of the lozenge and the southeast side of the region $Q$. We use notation $wt_1$ for this weight assignment (see Figure \ref{weight1}(b)).
\item[(2)] \emph{Assignment 2.} All left and vertical lozenges are also weighted by 1. However, a right lozenge is now weighted by $q^{n}$, where $n$ is the distance
 between the top of the lozenge and the base of the region $Q$. This assignment is  denoted by $wt_2$ (see Figure \ref{weight1}(c)).
\end{enumerate}

Let $T$ be a tiling of $Q$.  We denote by $wt_0(T)$, $wt_1(T)$ and $wt_2(T)$ the weights of the tiling $T$ with respect to the weight assignments $wt_0$, $wt_1$ and $wt_2$. We also denote by $\M_0(Q)$, $\M_1(Q)$ and $\M_2(Q)$ the tiling generating functions of $Q$ corresponding to the weight assignments $wt_0$, $wt_1$ and $wt_2$. It is easy to see that $\M_0(Q)$ is exactly the volume generation function of the generalized plane partitions corresponding to the lozenge tilings of $Q$.

In the next proposition, we will show that the three weight assignments are the same up to some multiplicative factors.


We define two functions
 \begin{align}
 \textbf{f}&\begin{pmatrix}x&y&z&t\\m&a&b&c\end{pmatrix}:=m\binom{y+b+1}{2}+z\binom{y+1}{2}+m(z+b)(y+a+b)+(z+b)\binom{m+1}{2}\notag\\
 &+x(z+b+c)(y+m+a+b+c)+(z+b+c)\binom{x+1}{2}+a(x+c)(y+a+b)+a\binom{x+c+1}{2}
 \end{align}
 and
  \begin{align}
\textbf{g}&\begin{pmatrix}x&y&z&t\\m&a&b&c\end{pmatrix}:=(y+b)\binom{m+1}{2}+myz+y\binom{z+1}{2}+m(z+b)(m+a)\notag\\
 &+m\binom{z+b+1}{2}+x(m+a)(z+b+c)+x\binom{z+b+c+1}{2}+(x+c)\binom{a+1}{2}.
 \end{align}
Note that $\textbf{f}\begin{pmatrix}x&y&z&t\\m&a&b&c\end{pmatrix}$ and $\textbf{g}\begin{pmatrix}x&y&z&t\\m&a&b&c\end{pmatrix}$ are both independent of $t$.

\begin{prop}\label{hexprop2} For any non-negative integers $m,a,b,c,x,y,z,t$
\begin{equation}\label{ratioweight1}
\M_1\left(Q\begin{pmatrix}x&y&z&t\\m&a&b&c\end{pmatrix}\right)
=q^{\textbf{f}\begin{pmatrix}x&y&z&t\\m&a&b&c\end{pmatrix}}\sum_{\pi}q^{|\pi|}
\end{equation}
and
\begin{equation}\label{ratioweight2}
\M_2\left(Q\begin{pmatrix}x&y&z&t\\m&a&b&c\end{pmatrix}\right)=q^{\textbf{g}\begin{pmatrix}x&y&z&t\\m&a&b&c\end{pmatrix}}\sum_{\pi}q^{|\pi|},
\end{equation}
where the sums on the right-hand sides are taken over all generalized plane partitions $\pi$ fitting in the compound box $\mathcal{B}\begin{pmatrix}x&y&z&t\\m&a&b&c\end{pmatrix}$.
\end{prop}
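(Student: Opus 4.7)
My plan is to prove both (\ref{ratioweight1}) and (\ref{ratioweight2}) via a lozenge-by-lozenge comparison of the three weight assignments. For each right lozenge $R$ appearing in a tiling $T$ of $Q$, I would verify that
\[
wt_1(R) = q^{l_0(R)}\,wt_0(R) \qquad\text{and}\qquad wt_2(R) = q^{h_0(R)}\,wt_0(R),
\]
where $l_0(R)$ and $h_0(R)$ are integers depending only on the \emph{column identity} of $R$ in the 3-dimensional picture, not on the number of cubes currently stacked in the column. Multiplying over all right lozenges in $T$ then gives $wt_1(T)=q^{\sum l_0}\,wt_0(T)$ and $wt_2(T)=q^{\sum h_0}\,wt_0(T)$, with each sum ranging over the fixed collection of column identities in $\mathcal{B}$. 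Setting $\textbf{f}=\sum l_0$ and $\textbf{g}=\sum h_0$ and summing over $T$ yields the proposition, since $\M_0(Q)=\sum_\pi q^{|\pi|}$.

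The lozenge-wise identity is a linearity statement about the standard isometric projection from 3D to the 2D tiling plane. Every right lozenge in a tiling of $Q$ is the image under this projection of the outermost face of a horizontal stack of cubes --- its ``column.'' As the column grows by one cube, the lozenge shifts by a fixed 2D vector whose component along the perpendicular to the base of $Q$ equals one lattice unit, and whose component along the perpendicular to the SE side of $Q$ also equals one lattice unit. Consequently both the top-to-base distance and the left-side-to-SE-side distance of the lozenge decompose as (column length) plus a term depending only on the column's indexing, which is precisely the identity I need.

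With tiling-independence of the ratios $wt_i(T)/wt_0(T)$ established, I would compute $\textbf{f}$ and $\textbf{g}$ by evaluating on the empty pile tiling $T_0$ of Figure~\ref{boxcombined}(b), in which every column has length zero, so that $\textbf{f}$ and $\textbf{g}$ are simply the total SE distance (resp.\ total base distance) of the right lozenges of $T_0$. The right lozenges of $T_0$ form the union of the back walls of the six rectangular sub-boxes constituting $\mathcal{B}$; partitioning them into rectangular strips on which each relevant distance is either constant or varies in an arithmetic progression, the eight summands of $\textbf{f}$ and $\textbf{g}$ would arise one by one, with the $\binom{k+1}{2}$ factors coming from arithmetic sums $1+2+\cdots+k$ along a strip, and the product factors coming from the constant offsets of the strips.

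The main obstacle is the bookkeeping in this final step: because the six sub-boxes of $\mathcal{B}$ sit at different base levels and different horizontal offsets dictated by the shamrock's placement along the boundary, correctly identifying the dimensions and offsets of each back-wall strip, and matching the resulting arithmetic sums against each of the eight summands of $\textbf{f}$ and $\textbf{g}$, requires patient tracking of the parameters $x,y,z,t,m,a,b,c$ and their combinations such as $z+b$, $z+b+c$, $y+a+b$, and $y+m+a+b+c$ that naturally appear as strip widths and offsets. No conceptual difficulty is expected, but the algebraic matching is laborious and must be laid out carefully.
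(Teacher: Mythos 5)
Your proposal is correct and rests on the same underlying idea as the paper's proof: the ratio $wt_i(T)/wt_0(T)$ ($i=1,2$) is independent of the tiling $T$, hence equals its value on the empty-pile tiling $T_0$, which is then matched against the explicit formulas for $\textbf{f}$ and $\textbf{g}$ (the paper itself notes that $q^{\textbf{f}}=wt_1(T_0)$ and $q^{\textbf{g}}=wt_2(T_0)$). The difference lies in how the invariance is established. You argue column by column, using the fact that adding a cube translates the column's top right lozenge by one lattice step in a direction that increases both its distance to the southeast side and its distance to the base by exactly $1$, so the $wt_1$- and $wt_2$-exponents of each right lozenge are (number of cubes in its column) plus a constant attached to the column alone. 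The paper instead splits the pile into the six sub-partitions $\pi_i$ living in the rectangular boxes of $\mathcal{B}$ and, within each hexagon $Hex(a_i,b_i,c_i)$, encodes the tiling by non-intersecting lozenge-paths, each containing a fixed number of right lozenges sharing a common correction factor; this packages the constant directly as $\sum_i\bigl(a_ib_ix_i+a_ib_i(b_i+1)/2\bigr)$, resp.\ $\sum_i\bigl(a_ib_iy_i+b_ia_i(a_i+1)/2\bigr)$. Your column-wise argument is slightly more elementary (no path encoding and no splitting needed for the invariance step), but it pushes all the box-by-box bookkeeping into the final evaluation on $T_0$ — exactly the strip/arithmetic-progression computation you defer — whereas the paper's decomposition does that bookkeeping up front and leaves only the substitution of $a_i,b_i,x_i,y_i$ read off from Figure \ref{boxcombined}. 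Once that final matching with the stated formulas for $\textbf{f}$ and $\textbf{g}$ is written out, your plan yields a complete proof, at the same level of rigor as the paper's own argument.
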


 \begin{figure}\centering
\includegraphics[width=12cm]{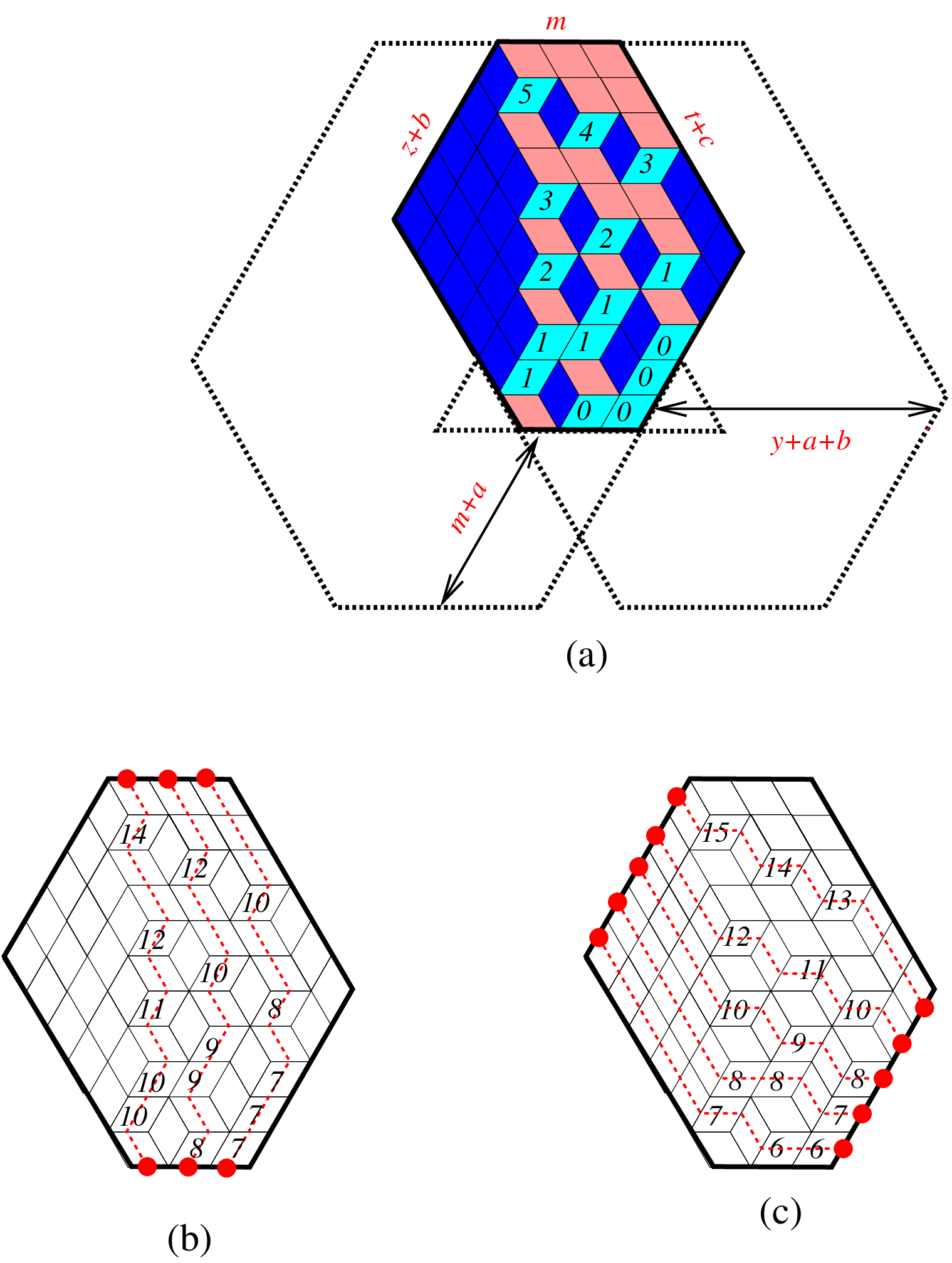}
\caption{The partial-partition corresponding to the box $3$.}
\label{box6}
\end{figure}

\begin{proof}
We use the following shorthand notations  in this proof: $\textbf{f}:=\textbf{f}\begin{pmatrix}x&y&z&t\\m&a&b&c\end{pmatrix}$,  $\textbf{g}:=\textbf{g}\begin{pmatrix}x&y&z&t\\m&a&b&c\end{pmatrix}$, $\mathcal{B}:=\mathcal{B}\begin{pmatrix}x&y&z&t\\m&a&b&c\end{pmatrix}$, and $Q:=Q\begin{pmatrix}x&y&z&t\\m&a&b&c\end{pmatrix}$.

Let $T$ be any lozenge tiling of the region $Q$, and $\pi$ the generalized plane partition corresponding to $T$. We only need to show that
\begin{equation}
\frac{wt_1(T)}{q^{|\pi|}}=q^{\textbf{f}} \quad\text{ and }\quad \frac{wt_2(T)}{q^{|\pi|}}=q^{\textbf{g}}.
\end{equation}

Assume that the box $i$ of the compound box $\mathcal{B}$ has size $a_i\times b_i \times c_i$ (for $1\leq i\leq 6$). The base of the box $i$ is depicted as a parallelogram $P_i$ consisting of right lozenges in Figure \ref{boxcombined}(b). We assume in addition that the left side of $P_i$ is $x_i$ units to the left of the southeast side of the region $Q$, and the bottom of $P_i$ is $y_i$ units above the bottom of region $Q$.

Divide the  generalized plane partition $\pi$ into $6$ disjoint partial-partitions $\pi_i$ ($1\leq i\leq 6$) fitting in the box $i$. Recall that the box $i$ is associated with a semi-regular hexagon with side lengths $a_i$, $b_i$, $c_i$, $a_i$, $b_i$, $c_i$, in clockwise order from the northwest side. Each partial-partition $\pi_i$ in turn gives a lozenge tiling $T_i$ of  the semi-regular hexagon $Hex(a_i,b_i,c_i)$. Figure \ref{box6}(a) shows the partial-partition $\pi_3$ of the generalized plane partition $\pi$ in Figure \ref{boxcombined}(a), as well as  the relative positions of the parallelogram $P_3$ to the bottom and the southeast side of the region $Q$ (for $x=3,y=3,z=3,t=4,m=3,a=2,b=2,c=2$). In particular, this gives $a_3 = z+b$, $b_3 = m$, $c_3 = t+c$, $x_3 = y+a+b$ and $y_3 =m+a$.

Apply the weight assignment $wt_1$ to the whole  tiling $T$ of the region $Q$. This yields a local weight assignment $wt_1^{(i)}$ for the tiling $T_i$ of hexagon $Hex(a_i,b_i,c_i)$. Precisely, each right lozenge in $T_i$ is  now weighted by $q^{x_i+l}$, where $l$ is the distance between the left side of the lozenge and the southeast side of the hexagon $Hex(a_i,b_i,c_i)$. Encode the tiling $T_i$ as a family of $b_i$ disjoint lozenge-paths connecting the top and the bottom of the hexagon (see the dotted paths in Figure \ref{box6}(b)). Dividing the weight of each right lozenge in the lozenge-path $j$ (from right to left) by $q^{x_i+j}$, we get the weight assignment $wt_0$ for $T_i$. Since the end points of the above lozenge-paths are fixed, each lozenge-path has exactly $a_i$ right lozenges. Thus, we have
\[\frac{wt_1^{(i)}(T_i)}{wt_0(T_i)}=\frac{wt_1^{(i)}(T_i)}{q^{|\pi_i|}}=q^{a_ib_ix_i+a_ib_i(b_i+1)/2}.\]
Multiplying all above equations for $i=1,2,\dotsc,6$, we get
\begin{equation}
\frac{wt_1(T)}{q^{|\pi|}}=q^{\sum_{i=1}^{6}(a_ib_ix_i+a_ib_i(b_i+1)/2)}.
\end{equation}
Next, we assume that the whole tiling $T$ of $Q$ is weighted by $wt_2$. We now encode the tiling $T_i$ of $Hex(a_i,b_i,c_i)$ as an $a_i$-tuple of disjoint lozenge-paths connecting the northwest side and the southeast side of the hexagon (see Figure \ref{box6}(c)).  Dividing each right lozenge in the lozenge-path $j$ (from bottom to top) by $q^{y_i+j}$, we get back again the weight assignment $wt_0$. We note that each lozenge-path has now $b_i$ right lozenges. Similar to the case of $wt_1$, we have
\begin{equation}
\frac{wt_2(T)}{q^{|\pi|}}=q^{\sum_{i=1}^{6}(a_ib_iy_i+b_ia_i(a_i+1)/2)}.
\end{equation}
Obtaining the formulas for $a_i,b_i,x_i,y_i$ in terms of $m,a,b,c,x,y,z,t$ from Figures \ref{boxcombined}(b) and (c), we get $\textbf{f}=\sum_{i=1}^{6}(a_ib_ix_i+a_ib_i(b_i+1)/2)$ and $\textbf{g}=\sum_{i=1}^{6}(a_ib_iy_i+b_ia_i(a_i+1)/2)$. This finishes our proof.
\end{proof}
We note that the powers $q^{\textbf{f}}$ and $q^{\textbf{g}}$ in the above proposition are exactly the weights $wt_1(T_0)$ and $wt_2(T_0)$ of the tiling $T_0$ corresponding to the empty pile in Figure \ref{boxcombined}(b).

View the hexagon $Hex(a,b,c)$ as a special case of the region  $Q:=Q\begin{pmatrix}x&y&z&t\\m&a&b&c\end{pmatrix}$ with an empty shamrock hole, i.e., set $m = a = b = c = 0$, and then replace $z$, $x + y$ and $t$ by $a,$ $b$ and $c$, respectively. We then have the following consequence of  Proposition \ref{hexprop2} and MacMahon's $q$-formula (\ref{qMac}).

\begin{cor}\label{hexprop1} For non-negative integers $a,b,c$
\begin{equation} \label{hexeq1}
\M_1\big(Hex(a,b,c)\big)=q^{ab(b+1)/2}\frac{\Hf_q(a)\Hf_q(b)\Hf_q(c)\Hf_q(a+b+c)}{\Hf_q(a+b)\Hf_q(b+c)\Hf_q(c+a)}
\end{equation}
and
\begin{equation}\label{hexeq2}
\M_2\big(Hex(a,b,c)\big)=q^{ba(a+1)/2}\frac{\Hf_q(a)\Hf_q(b)\Hf_q(c)\Hf_q(a+b+c)}{\Hf_q(a+b)\Hf_q(b+c)\Hf_q(c+a)}.
\end{equation}
\end{cor}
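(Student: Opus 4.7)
The plan is to derive the corollary as a direct specialization of Proposition~\ref{hexprop2}, combined with MacMahon's classical formula~(\ref{qMac}). First I would realize $Hex(a,b,c)$ as the region $Q\begin{pmatrix}0&b&a&c\\0&0&0&0\end{pmatrix}$, i.e.\ with empty shamrock ($m=a=b=c=0$ in the $Q$-notation) and with the choice $x=0$, $y=b$, $z=a$, $t=c$ for the remaining $Q$-parameters. Under this specialization, five of the six rectangular boxes making up the compound box $\mathcal{B}$ collapse, leaving a single $a\times b\times c$ box, so the generalized plane partitions fitting in $\mathcal{B}$ coincide with the ordinary plane partitions fitting in that box. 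By~(\ref{qMac}), the sum $\sum_{\pi}q^{|\pi|}$ appearing on the right of both~(\ref{ratioweight1}) and~(\ref{ratioweight2}) equals the MacMahon product featured in~(\ref{hexeq1}) and~(\ref{hexeq2}).

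Next, I would substitute the same specialization into the definitions of $\textbf{f}$ and $\textbf{g}$ given just before Proposition~\ref{hexprop2}. In $\textbf{f}$, every summand other than $z\binom{y+1}{2}$ either carries a factor from the zero set $\{m,a,b,c,x\}$ or vanishes because $\binom{m+1}{2}=\binom{x+1}{2}=0$, leaving $\textbf{f}=a\binom{b+1}{2}=ab(b+1)/2$; the analogous inspection of $\textbf{g}$ collapses it to $y\binom{z+1}{2}=b\binom{a+1}{2}=ba(a+1)/2$. Plugging these exponents and the MacMahon product back into~(\ref{ratioweight1}) and~(\ref{ratioweight2}) then yields precisely~(\ref{hexeq1}) and~(\ref{hexeq2}).

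There is essentially no obstacle beyond the notational collision between the hexagon parameters $a,b,c$ and the shamrock parameters $a,b,c$ inside the $Q$-notation; once this is tracked consistently, everything is routine algebra. As a sanity check (and an alternative route that avoids Proposition~\ref{hexprop2} altogether), one can run the single-box version of the path-encoding argument used in its proof: encoding a tiling of $Hex(a,b,c)$ as $b$ disjoint lozenge-paths from top to bottom, each containing exactly $a$ right lozenges, immediately produces the multiplicative factor $q^{a(1+2+\cdots+b)}=q^{ab(b+1)/2}$ relating $wt_1$ to $wt_0$; the symmetric argument using $a$ lozenge-paths from the northwest side to the southeast side, each with $b$ right lozenges, gives~(\ref{hexeq2}).
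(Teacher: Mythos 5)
Your proposal is correct and follows exactly the route the paper intends: it views $Hex(a,b,c)$ as the $Q$-region with empty shamrock and invokes Proposition~\ref{hexprop2} together with MacMahon's formula~(\ref{qMac}), merely filling in the specialization of $\textbf{f}$ and $\textbf{g}$ (which indeed collapse to $ab(b+1)/2$ and $ba(a+1)/2$). The alternative single-box lozenge-path argument you sketch is the same computation as in the proof of Proposition~\ref{hexprop2} restricted to one box, so nothing genuinely different is involved.
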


The following definitions will be used in the proof of the next lemma.
A \emph{column-strict plane partition} is a plane partition having columns strictly decreasing. A \textit{semihexagon} $SH_{a,b}$ is the upper half of a lozenge hexagon $Hex(a,b,a)$.
We are interested in the lozenge tilings of the semihexagon $SH_{a,b}$, where $a$ up-pointing unit triangles at the positions $s_1,s_2,\dots,s_a$  have been removed from the base. Denote by $SH_{a,b}(s_1,s_2,\dots,s_a)$ the resulting semihexagon with dents (see Figure \ref{holeeyhex}(a) for the region $SH_{7,5}(1,2,6,7,10,11,12)$).  Assume that the lozenges in the semihexagon are weighted by $wt_2$, and we still use the notation $\M_2$ for the corresponding tiling generating function of the semihexagon with dents. There is a well-known (weight preserving) bijection between the lozenge tilings of $SH_{a,b}(s_1,s_2,\dots,s_a)$ and the column-strict plane partitions of shape $(s_{a}-a,s_{a-1}-a+1,\dotsc,s_1-1)$ with positive entries at most $a$, i.e. $wt_2(T)=q^{|\pi_T|}$, where $\pi_T$ is the plane partition corresponding to the tiling $T$ (see e.g. \cite{CLP} and \cite{Car}).

 \begin{figure}\centering
\includegraphics[width=14cm]{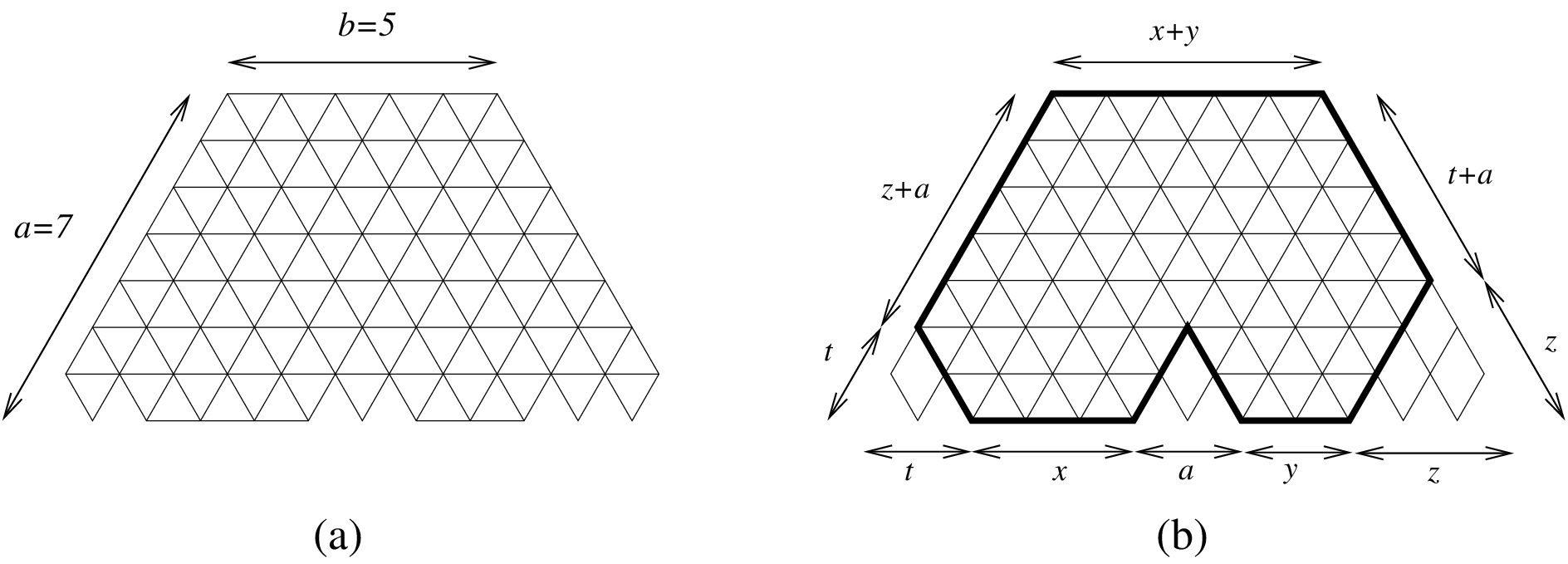}
\caption{(a) The semihexagon with dents $SH_{7,5}(1,2,6,7,10,11,12)$. (b) Obtaining the region $K_{a}(x,y,z,t)$ (restricted by the bold contour) from a semihexagon with dents by removing forced lozenges.}
\label{holeeyhex}
\end{figure}

We have the following $q$-enumeration of the lozenge tilings of a hexagon with a triangular hole on the base $K_{a}(x,y,z,t)$ (defined as the region restricted by the bold contour in Figure \ref{holeeyhex}(b)).
Note that $K_{a}(x, y, z, t)$ is the region $Q\begin{pmatrix}x&y&z&t\\0&a&0&0\end{pmatrix}$.
 \begin{lem}\label{hexprop3b} For non-negative $a,x,y,z,t$
   \begin{align}
 \M_2\big(K_{a}(x,y,z,t)\big)
&=q^{y\binom{z+1}{2}+x\binom{a+z+1}{2}}\frac{\Hf_q(a)\Hf_q(x)\Hf_q(y)\Hf_q(z)\Hf_q(t)}{\Hf_q(x+t)\Hf_q(a+x)\Hf_q(a+y)\Hf_q(y+z)}\notag\\
&\times \frac{\Hf_q(a+x+t)\Hf_q(a+x+y)\Hf_q(a+y+z)\Hf_q(a+x+y+z+t)}{\Hf_q(a+x+y+t)\Hf_q(a+x+y+z)\Hf_q(a+t+z)}.
 \end{align}
 \end{lem}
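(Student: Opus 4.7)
The plan is to interpret a lozenge tiling of $K_a(x,y,z,t)$ as a tiling of a suitable semihexagon with dents, after accounting for a region of forced lozenges, and then apply the weight-preserving bijection with column-strict plane partitions discussed immediately above the lemma.

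First, I would identify the minimal semihexagon with dents $SH_{N,M}(s_1,\ldots,s_N)$ in which $K_a(x,y,z,t)$ embeds so that the complement inside the semihexagon consists entirely of forced lozenges. From Figure \ref{holeeyhex}(b), the bold contour cuts off two pieces flanking the triangular hole of side $a$; the boundary of the hole together with the straight sides of the semihexagon forces each lozenge in these pieces in turn. I would read off $N$, $M$, and the dent positions $s_1,\ldots,s_N$ directly from the geometry, then compute the total $wt_2$-weight of the forced lozenges as an explicit power of $q$, so that
\[
\M_2\bigl(SH_{N,M}(s_1,\ldots,s_N)\bigr) \;=\; q^{E}\,\M_2\bigl(K_a(x,y,z,t)\bigr)
\]
for an exponent $E$ that, combined with the contribution from the plane-partition step, should recover the prefactor $q^{y\binom{z+1}{2}+x\binom{a+z+1}{2}}$.

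Using the bijection quoted in the excerpt, $\M_2(SH_{N,M}(s_1,\ldots,s_N))$ equals the $q$-generating function (with respect to the norm) of column-strict plane partitions of shape $(s_N-N,s_{N-1}-N+1,\ldots,s_1-1)$ with bounded entries. I would evaluate this sum by the Lindström--Gessel--Viennot lemma applied to the non-intersecting lattice paths dual to the lozenge tilings: the resulting determinant has a Vandermonde/Cauchy structure in $q$-integers and telescopes to a ratio of $q$-hyperfactorials. Alternatively, one can invoke a Jacobi--Trudi-type expansion for Schur polynomials specialized to $q$-integers, which gives the same product.

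The main obstacle will be the combinatorial bookkeeping in identifying the correct ambient semihexagon and in tracking the forced-lozenge weights: the height above the base (entering $wt_2$) of each forced right lozenge depends in a subtle way on $x$, $y$, $z$, and $a$, and the prefactor in the stated lemma is asymmetric in these variables. Matching the determinant evaluation and the forced-region exponent against the target product of $\Hf_q$'s is algebraically dense but should reduce to standard $q$-identities relating products such as $\Hf_q(x+t)$, $\Hf_q(a+x+t)$, and $\Hf_q(a+x+y+z+t)$ to ratios of $q$-factorials.
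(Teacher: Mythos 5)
Your proposal follows essentially the same route as the paper: $K_a(x,y,z,t)$ is exactly the semihexagon $SH_{a+z+t,\,x+y}$ with dents at $\{1,\dotsc,t\}\cup\{t+x+1,\dotsc,t+x+a\}\cup\{t+x+a+y+1,\dotsc,t+x+a+y+z\}$ with its forced lozenges removed, and the dented-semihexagon generating function (which the paper simply quotes from Stanley instead of rederiving via Lindstr\"om--Gessel--Viennot) then simplifies to the stated hyperfactorial product. One piece of bookkeeping is easier than you fear: the forced lozenges are all \emph{vertical}, hence have $wt_2$-weight $1$, so your exponent $E$ is $0$ and the prefactor $q^{y\binom{z+1}{2}+x\binom{a+z+1}{2}}$ emerges entirely from the factor $q^{\sum_i(s_i-i)}$ together with the powers of $q$ produced when rewriting $\prod_{i<j}\frac{q^{s_j}-q^{s_i}}{q^{j}-q^{i}}$ in terms of $q$-integers.
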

 \begin{proof}
By the above bijection between lozenge tilings of the semihexagon and the column-strict plane partitions, we have
\begin{equation}\label{semieq}
 \M_2\big(SH_{a,b}(s_1,s_2,\dotsc,s_a)\big)=\sum_{\mu}q^{|\mu|}= q^{\sum_{i=1}^{a}(s_i-i)}\prod_{1\leq i<j\leq a}\frac{q^{s_j}-q^{s_i}}{q^{j}-q^{i}},\end{equation}
 where the sum after the first equality sign is taken over all  column-strict plane partitions $\mu$ of shape  $(s_{a}-a,s_{a-1}-a+1,\dotsc,s_1-1)$ with positive entries at most $a$. For the second equality see e.g. \cite[pp. 374--375]{Stanley}.

The region  $K_{a}(x,y,z,t)$ is obtained by removing forced vertical lozenges from the semihexagon $SH_{a+z+t,x+y}$ with dents at the positions
   $\{1,2,..,t\} \cup\{t+x+1,t+x+2,\dotsc,t+x+a\} \cup \{t+x+a+y+1,t+x+a+y+2,\dotsc,t+x+a+y+z\}$.  Thus, our lemma follows from (\ref{semieq}).
 \end{proof}

\section{Two $q$-enumerations of magnet bar regions}

\begin{figure}\centering
\includegraphics[width=7cm]{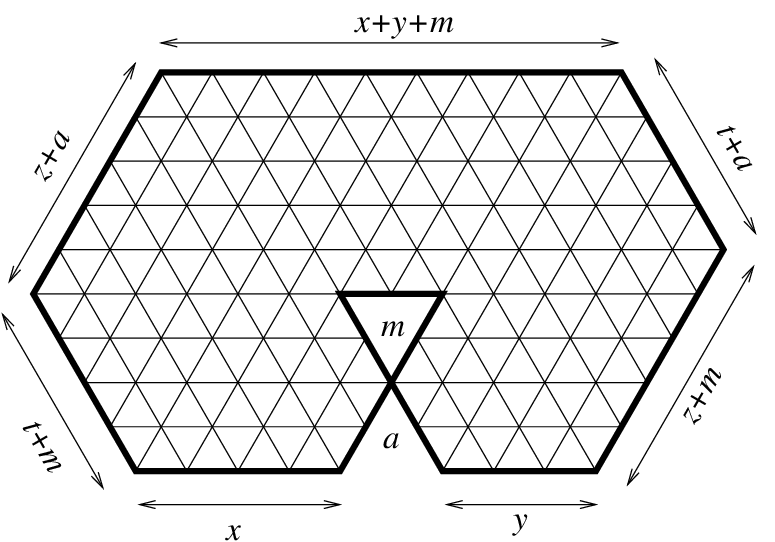}
\caption{The magnet bar $B_{2,2}(4,3,3,2)$.}
\label{magnetbar}
\end{figure}

When  $b=c=0$, our region $Q\begin{pmatrix}x&y&z&t\\m&a&b&c\end{pmatrix}$ becomes a \emph{magnet bar region} $B_{m,a}(x,y,z,t)$ first introduced  by Ciucu and Krattenthaler in \cite{CK}.   Figure \ref{magnetbar} shows the magnet bar region $B_{2,2}(4,3,3,2)$.   Ciucu and Krattenthaler \cite{CK} proved a simple product formula for the tiling number of a magnet bar region.
%
In this section, we generalize their result by $q$-enumerating lozenge tilings of the magnet bar region $B_{m,a}(x,y,z,t)$. Our $q$-enumerations will be used in the proof of Theorem \ref{qmain}.

\begin{prop}\label{hexprop4} For non-negative integers $m,a,x,y,z,t$
\begin{align}\label{qmagnet}
 \M_2\big(B_{m,a}(x,y,z,t)\big)&=q^{y\binom{m+1}{2}+(m+x+y)\binom{z+1}{2}+myz+(m+a)(x+m)z+x\binom{a+1}{2}} \notag\\
&\times\frac{\Hf_q(m+a+x+y+z+t)}{\Hf_q(m+a+x+y+t)\Hf_q(m+a+x+y+z)}\notag\\
 &\times\frac{\Hf_q(m+a+x+t)\Hf_q(m+a+x+y)\Hf_q(m+a+y+z)}{\Hf_q(m+a+z+t)\Hf_q(m+a+x)
 \Hf_q(m+a+y)}\notag\\
 &\times \frac{\Hf_q(x)\Hf_q(y)\Hf_q(z)\Hf_q(t)\Hf_q(m)\Hf_q(a)^2}{\Hf_q(a+x)\Hf_q(a+y)\Hf_q(z+t)\Hf_q(m+a)}\notag\\
 &\times\frac{\Hf_q(m+z+t)\Hf_q(m+a+x)\Hf_q(m+a+y)}{\Hf_q(m+y+z)\Hf_q(m+x+t)}.
 \end{align}
\end{prop}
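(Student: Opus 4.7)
The plan is to prove Proposition \ref{hexprop4} by induction on $x+y+z+t$ using Kuo's condensation identity (Theorem \ref{kuothm}) applied to the planar dual of the magnet bar region $B_{m,a}(x,y,z,t)$, with lozenges weighted by $wt_2$. The strategy follows the pattern used by Ciucu and Krattenthaler in \cite{CK} and in the author's prior Kuo-condensation work (\cite{Lai15a}, \cite{Lai15b}): four unit triangles are chosen near the four corners of the hexagonal boundary so that, after cancellation of forced lozenges via (\ref{forceeq}) and separation via the Region-Splitting Lemma \ref{GS}, each of the six sub-regions appearing on the two sides of (\ref{kuoeq}) is itself either a smaller magnet bar or a region handled in Section 3.

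First I would select four vertices $u, w \in V_1$ (up-pointing unit triangles) and $v, s \in V_2$ (down-pointing ones), in cyclic order on the outer face of $B_{m,a}(x,y,z,t)$, positioned near the NW, SW, SE, and NE corners of the boundary hexagon. The choice is dictated by the requirement that removing each pair $\{u,v\}$, $\{w,s\}$, $\{u,s\}$, $\{v,w\}$, or the full quadruple $\{u,v,w,s\}$, forces a long block of lozenges along one side of the boundary that, once stripped away, leaves a region of the form $B_{m,a}(x',y',z',t')$ with one or two of the four coordinates decremented by one. Substituting into (\ref{kuoeq}) then yields a recurrence of the schematic form
\begin{equation*}
\M_2(B_{m,a}(x,y,z,t))\cdot\M_2(B_{m,a}(x{-}1,y,z,t{-}1))=q^{\alpha}\,\M_2(\cdots)\,\M_2(\cdots)+q^{\beta}\,\M_2(\cdots)\,\M_2(\cdots),
\end{equation*}
where each factor is a magnet bar with parameters strictly smaller than $(x,y,z,t)$ in a componentwise order, and the exponents $\alpha,\beta$ are explicit quadratic polynomials in $m,a,x,y,z,t$ encoding the cumulative $wt_2$-weights of the stripped forced lozenges (each right lozenge contributing a height-to-base factor).

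The base of the induction consists of the cases in which one of $x, y, z, t$ vanishes. In each such case, Lemma \ref{GS} decomposes $B_{m,a}(x,y,z,t)$ into a plain semi-regular hexagon, handled by Corollary \ref{hexprop1}, possibly glued to a semihexagon with dents whose $wt_2$-generating function is evaluated via the bijection with column-strict plane partitions used in the proof of Lemma \ref{hexprop3b}, namely formula (\ref{semieq}). In each of these base cases one checks that the right-hand side of (\ref{qmagnet}) collapses by $\Hf_q$-cancellation to the corresponding evaluation. For the inductive step one substitutes the conjectural closed form into both sides of the Kuo recurrence; after clearing common $\Hf_q$-factors this reduces to an identity among small products of $q$-integers together with a linear identity in the $q$-prefactor exponents, both of which are verifiable by routine manipulation.

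The main obstacle will be two-fold. Geometrically, choosing $u, v, w, s$ so that each of the five derived regions splits cleanly into a magnet bar plus a block of forced lozenges --- rather than producing a new, irreducible shape requiring its own enumeration --- is the delicate step; the choice must in particular be compatible with the presence of the $m$- and $a$-triangles of the hole so that these triangles do not intrude into any forced-lozenge region. Algebraically, the most error-prone part is keeping track of the exponents of $q$ contributed by the forced lozenges under $wt_2$ and showing that they combine with the cumbersome quadratic prefactor $y\binom{m+1}{2}+(m+x+y)\binom{z+1}{2}+myz+(m+a)(x+m)z+x\binom{a+1}{2}$ to satisfy the Kuo identity exactly on the nose.
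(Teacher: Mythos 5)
Your overall strategy (Kuo condensation on the $wt_2$-weighted dual graph, induction, base cases resolved by the Region-splitting Lemma \ref{GS}, Corollary \ref{hexprop1} and the dented-semihexagon evaluation) is the same method the paper uses, but as it stands the plan has two concrete gaps. First, the heart of the argument --- the placement of $u,v,w,s$ and the recurrence it produces --- is left as a guess, and the properties you ascribe to it are not attainable. You place the four triangles near the four corners NW, SW, SE, NE and assert that all six regions in (\ref{kuoeq}) are magnet bars with parameters \emph{componentwise} strictly smaller; in fact, for any workable choice one of the two ``cross'' terms necessarily has a parameter increased (in the paper's choice the term $B_{m,a}(x,y-1,z+1,t-1)$ appears), so only a sum such as $y+z+t$ decreases. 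Moreover, triangles at the SW/SE corners force lozenges along the base, which is exactly where the $m$- and $a$-triangles of the hole sit, so those choices do not strip to a clean $B$-type region; the working choice clusters all four triangles along the north and northeast sides, away from the dent, so that only $y$, $z$, $t$ move and $x$, $m$, $a$ stay fixed. Your schematic recurrence decrementing $x$ and $t$ is therefore not what the geometry delivers, and without exhibiting an explicit admissible quadruple the induction step is not established.

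Second, your base-case list ($x=0$, $y=0$, $z=0$ or $t=0$) does not match what the induction step requires. The four vertices are only well-defined when the relevant boundary sides have length at least $2$; with the cluster choice this is guaranteed by assuming $m,a,y,z,t\geq 1$, so the cases $m=0$ and $a=0$ must also be treated as base cases (the paper does so, reducing to $K$-type regions and weighted hexagons). Relatedly, your sketch of the base cases underestimates the bookkeeping: after splitting and rotating/reflecting, the induced weight is no longer $wt_2$ but a shifted version of $wt_1$ (or $wt_2$ with a constant shift), so one needs Proposition \ref{hexprop2} (and weight divisions as in (\ref{forceeq})) to convert back before Corollary \ref{hexprop1} or Lemma \ref{hexprop3b} can be applied; this conversion is precisely the source of the quadratic exponent in (\ref{qmagnet}). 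These are fixable with the right vertex choice and an enlarged set of base cases, but they are missing from the proposal as written.
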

\begin{proof}
We prove (\ref{qmagnet}) by induction on $y+z+t$.  Throughout this proof, we assume that our magnet bar region is weighted by $wt_2$.

Our main goal is to obtain a recurrence for the $\M_2$-generating function of the magnet bar by using Kuo's condensation Theorem \ref{kuothm}. In order to apply the theorem, certain side-lengths of the magnet bar must be large enough, which requires $x+y+m\geq 2$ and $t+a\geq2$. Taking into account also the base cases of the recurrence, we need to consider the situations when $m=0$, $a=0$, $y=0$, $z=0$ or $t=0$.

If $m=0$, then our magnet bar region becomes the region $K_{a}(x,y,z,t)$ in Lemma \ref{hexprop3b}, and (\ref{qmagnet}) follows.

If $a=0$, by removing forced lozenges along the base of the region $B_{m,0}(x,y,z,t)$, we get the weighted hexagon $Hex(z,x+y+m,t)$ in which a right lozenge is weighted by $q^{m+l}$, where $l$ is the distance from the top of the lozenge to the bottom of the hexagon (see Figure \ref{basecase2}(e)). By dividing the weight of each right lozenge of the hexagon by $q^{m}$, we get back the weight assignment $wt_2$.  Since the product of weights of the forced lozenges in Figure \ref{basecase2}(e)  is  equal to $q^{y\binom{m+1}{2}}$, we get from (\ref{forceeq})
\begin{equation}
\M_2\big(B_{m,0}(x,y,z,t)\big)=q^{y\binom{m+1}{2}}q^{mz(x+y+m)}\M_2\big(Hex(z,x+y+m,t)\big),
\end{equation}
where the factor $q^{mz(x+y+m)}$ comes from the weight division. Then  (\ref{qmagnet}) follows  from Corollary \ref{hexprop1}.


If $y=0$, after removing forced vertical lozenges (which have the weight 1), we get a new weighted region $R$ (the region restricted by the bold contour in Figure \ref{basecase2}(b)). By rotating $R$ $60^{\circ}$ clockwise and reflecting the resulting region about a vertical line, we get the region $K_{m}(z,a,x,t)$ weighted by $wt_1$. Thus, we have
\begin{equation}
\M_2\big(B_{m,a}(x,0,z,t)\big)=\M_1\big(K_{m}(z,a,x,t)\big),
\end{equation}
and (\ref{qmagnet}) follows from Proposition \ref{hexprop2} and Lemma \ref{hexprop3b}.

 \begin{figure}\centering
\includegraphics[width=15cm]{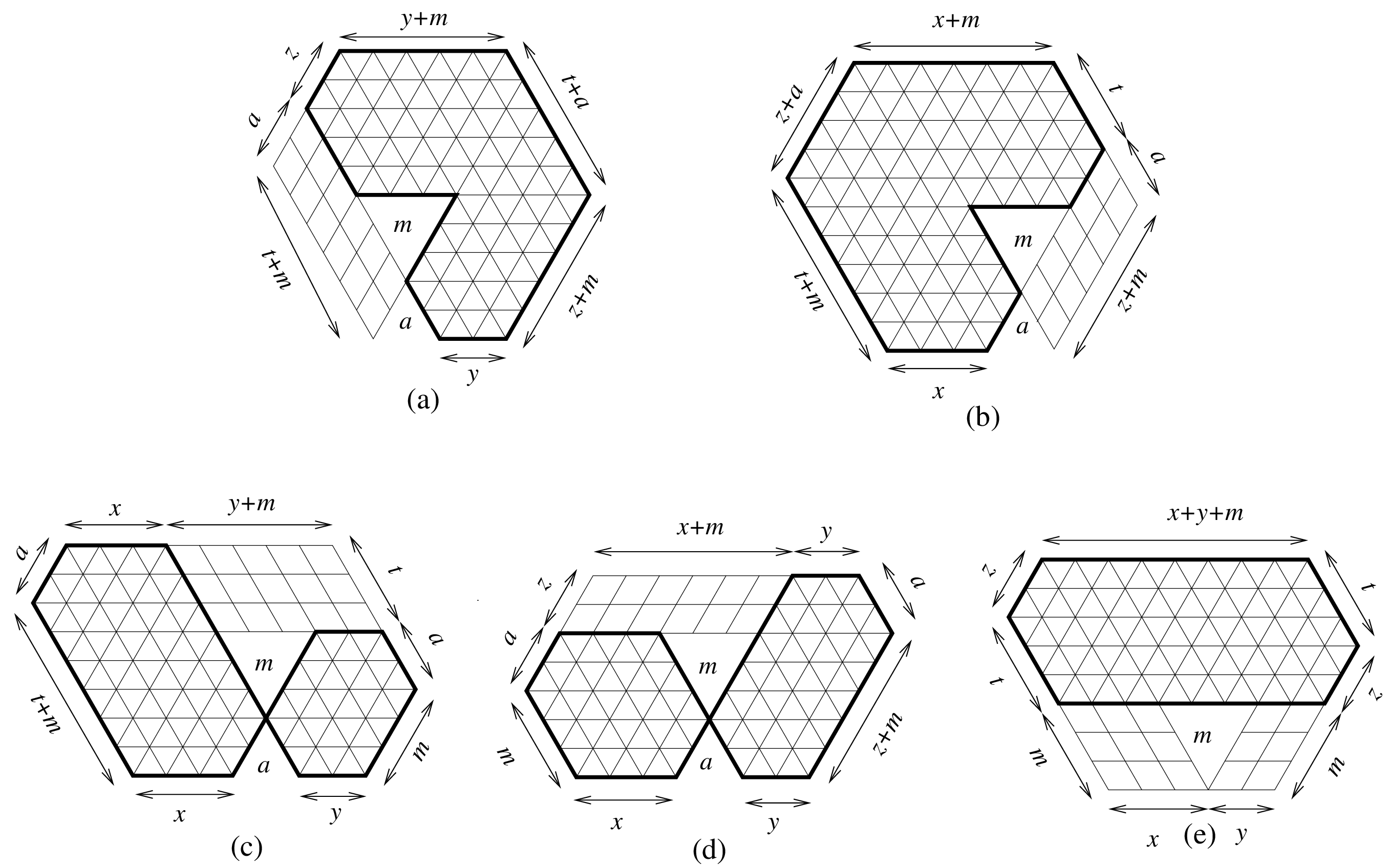}
\caption{The base cases in the proofs of Propositions \ref{hexprop4} and \ref{hexprop5}: (a) $x=0$, (b) $y=0$, (c) $z=0$, (d) $t=0$, and (e) $a=0$.}
\label{basecase2}
\end{figure}

If $z=0$, by applying Region-splitting Lemma \ref{GS} as in Figure \ref{basecase2}(c), we get
\begin{equation}
\M_2\big(B_{m,a}(x,y,0,t)\big)=\M_2\big(Hex(m,y,a)\big)\M_2\Big(B_{m,a}(x,y,0,t)-Hex(m,y,a)\Big).
\end{equation}
Next, we remove the forced left lozenges  from the region $B_{m,a}(x,y,0,t)-Hex(m,y,a)$  and obtain the hexagon $Hex(a,x,t+m)$ weighted by $wt_2$. Then we get
\begin{equation}
\M_2\big(B_{m,a}(x,y,0,t)\big)=\M_2\big(Hex(m,y,a)\big)\M_2\big(Hex(a,x,t+m)\big),
\end{equation}
and (\ref{qmagnet}) follows from  Corollary \ref{hexprop1}.

If $t=0$, similar to the case when $z=0$, the Region-splitting Lemma \ref{GS} implies
\begin{equation}
\M_2\big(B_{m,a}(x,y,z,0)\big)=\M_2\big(Hex(z+m,y,a)\big)\M_2\Big(B_{m,a}(x,y,z,0)-Hex(z+m,y,a)\Big)
\end{equation}
(see Figure \ref{basecase2}(d)).
We also get the hexagon $Hex(a,x,m)$ (weighted by $wt_2$) after removing forced lozenges from the region $B_{m,a}(x,y,z,0)-Hex(z+m,y,a)$. However, our forced lozenges are now right lozenges, which have weight product equal to $q^{(m+a)(x+m)z+(x+m)\binom{z+1}{2}}$. Thus, we get
\[\M_2\Big(B_{m,a}(x,y,z,0)-Hex(z+m,y,a)\Big)=q^{(m+a)(x+m)z+(x+m)\binom{z+1}{2}}\M_2\big(Hex(a,x,m)\big),\] so
\begin{equation}
\M_2\big(B_{m,a}(x,y,z,0)\big)=q^{(m+a)(x+m)z+(x+m)\binom{z+1}{2}}\M_2\big(Hex(z+m,y,a)\big)\M_2\big(Hex(a,x,m)\big).
\end{equation}
Again, (\ref{qmagnet}) is implied by Corollary \ref{hexprop1}.

\medskip

For the induction step, we assume that $m,a, y,z,t\geq 1$ and that (\ref{qmagnet}) holds for any magnet bar regions, which have the sum of the $y$-, $z$- and $t$-parameters strictly less than $y+z+t$.

 \begin{figure}\centering
\includegraphics[width=12cm]{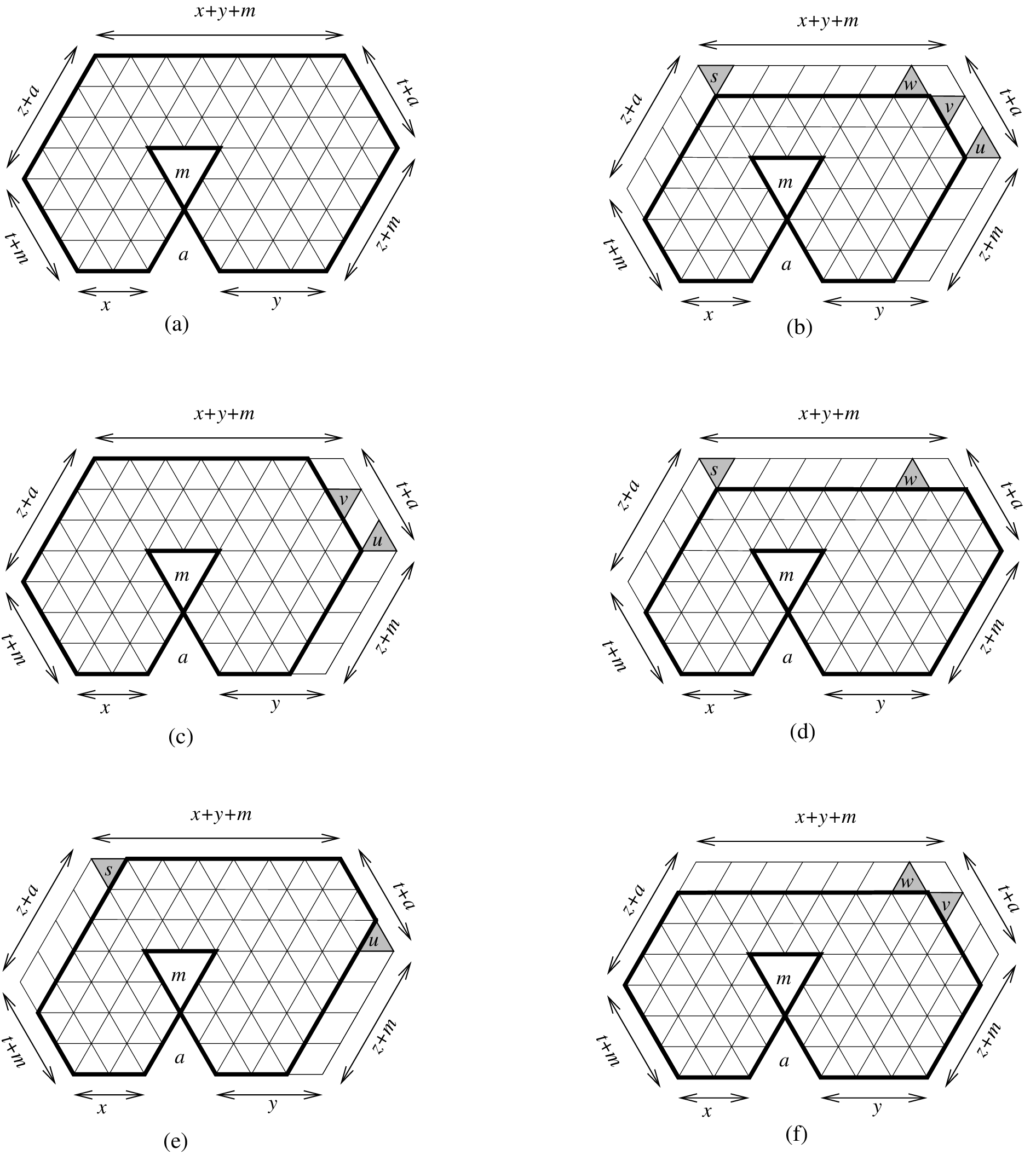}
\caption{Obtaining the recurrence for the tiling generating functions of  magnet bar regions.}
\label{kuomagnet}
\end{figure}
We apply Kuo Theorem \ref{kuothm} to the dual graph $G$ of the magnet bar region $B_{m,a}(x,y,z,t)$ (weighted by $wt_2$). We pick the four vertices $u, v, w,s$ as in Figure \ref{kuomagnet}(b). In particular, the four shaded unit triangles correspond to the four vertices: the shaded unit triangle corresponding to $u$ is the lowest one, and  $v, w,s$ correspond to the next shaded unit triangles as we move counter-clockwise from the lowest one. We notice that the north side of the region has length $x+y+m\geq y+m\geq 2$ and the northeast side has length $t+a\geq 2$, so the four vertices $u,v,w,s$ are well-defined.

By removing the lozenges  forced by the shaded unit triangles, we get back new $B$-type regions weighted by $wt_2$.
Collecting the weights of those forced  lozenges, we get
\begin{equation}\label{maineq5b}
\M(G-\{u,v,w,s\})=q^{\binom{z+m+1}{2}+(x+y+m-2)(z+t+m+a)}\M_2\big(B_{m,a}(x,y-1,z,t-1)\big),
\end{equation}
\begin{equation}\label{maineq1b}
\M(G-\{u,v\})=q^{\binom{z+m+1}{2}}\M_2\big(B_{m,a}(x,y-1,z,t)\big),
\end{equation}
\begin{equation}\label{maineq2b}
\M(G-\{w,s\})=q^{(x+y+m-2)(z+t+m+a)}\M_2\big(B_{m,a}(x,y,z,t-1)\big),
\end{equation}
\begin{equation}\label{maineq3b}
\M(G-\{u,s\})=q^{\binom{z+m+1}{2}}\M_2\big(B_{m,a}(x,y-1,z+1,t-1)\big) ,
\end{equation}
and
\begin{equation}\label{maineq4b}
\M(G-\{v,w\})=q^{(x+y+m-1)(z+t+m+a)}\M_2\big(B_{m,a}(x,y,z-1,t)\big)
\end{equation}
(see Figures \ref{kuomagnet}(b)--(f), respectively).
Plugging the above identities into the equation (\ref{kuoeq}) in  Kuo Condensation Theorem \ref{kuothm},  we obtain
\begin{align}
\M_2\big(B_{m,a}(x,y,z,t)\big)&\M_2\big(B_{m,a}(x,y-1,z,t-1)\big)=\notag\\
&\M_2\big(B_{m,a}(x,y-1,z,t)\big)\M_2\big(B_{m,a}(x,y,z,t-1)\big)\notag\\
&+q^{z+t+m+a}\M_2\big(B_{m,a}(x,y-1,z+1,t-1)\big)\M_2\big(B_{m,a}(x,y,z-1,t)\big).
\end{align}
All regions in the above equation, except for the first one, have the sum of their $y$-, $z$- and $t$-parameters strictly less than $y+z+t$. Thus, by the induction hypothesis, those regions have their
tiling generating functions given by (\ref{qmagnet}). By substituting these formulas into the above equation and performing some simplifications, one readily gets $\M_2\left(B_{m,a}(x,y,z,t)\right)$ equal exactly to the expression on the right-hand side of (\ref{qmagnet}). This finishes our proof.
\end{proof}

We need another tiling $q$-enumeration of the magnet bar  region as follows.

\medskip

Assume that we now give all right and left lozenges in the magnet bar region $B_{m,a}(x,y,z,t)$ a weight 1. Next, we give a \emph{vertical} lozenge a weight $q^{l}$, where $l$ is the distance between the \emph{northeast} side of the lozenge and the \emph{southwest} side of the region. We denote by $wt_3$ the new weight assignment, and $\M_3$ the corresponding tiling generating function.

\begin{prop}\label{hexprop5} For non-negative integers $m,a,x,y,z,t$
   \begin{align}\label{qmagnet2}
 \M_3&\big(B_{m,a}(x,y,z,t)\big)
=q^{m\binom{a+1}{2}+t\binom{z+a+1}{2}+a(z+m)(x+a)+a\binom{z+m+1}{2}}\notag\\
&\times \frac{\Hf_q(m+a+x+y+z+t)}{\Hf_q(m+a+x+y+t)\Hf_q(m+a+x+y+z)}\notag\\
 &\times\frac{\Hf_q(m+a+x+t)\Hf_q(m+a+x+y)\Hf_q(m+a+y+z)}{\Hf_q(m+a+z+t)\Hf_q(m+a+x)
 \Hf_q(m+a+y)}\notag\\
 &\times \frac{\Hf_q(x)\Hf_q(y)\Hf_q(z)\Hf_q(t)\Hf_q(m)\Hf_q(a)^2}{\Hf_q(a+x)\Hf_q(a+y)\Hf_q(z+t)\Hf_q(m+a)}\notag\\
 &\times\frac{\Hf_q(m+z+t)\Hf_q(m+a+x)\Hf_q(m+a+y)}{\Hf_q(m+y+z)\Hf_q(m+x+t)}.
 \end{align}
\end{prop}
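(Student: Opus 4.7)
The plan is to follow the same two-level strategy used for Proposition~\ref{hexprop4}: induct on $y+z+t$ with base cases $m=0$, $a=0$, $y=0$, $z=0$, and $t=0$, and apply Kuo condensation (Theorem~\ref{kuothm}) under the weight assignment $wt_3$ for the inductive step.

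For the base cases, I will perform the same structural moves as in the proof of Proposition~\ref{hexprop4}: strip forced lozenges, invoke the Region-splitting Lemma~\ref{GS}, and reduce to weighted (semi-)hexagons or $K_a$-type regions. The subtle point is that the resulting subregions now carry a $wt_3$-type weight on vertical lozenges (rather than the $wt_2$-type weight on right lozenges). To evaluate these I will establish a $wt_3$-analog of Corollary~\ref{hexprop1} and of Lemma~\ref{hexprop3b} by encoding tilings through the third family of disjoint lozenge-paths --- the one running through vertical lozenges --- in direct analogy with the lozenge-path argument inside the proof of Proposition~\ref{hexprop2}; equivalently, one may obtain these evaluations by relating $wt_3$ on a hexagon to $wt_2$ on a $60^\circ$-rotated copy with permuted side lengths. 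Either way, the output is a MacMahon-type product formula multiplied by an explicit power of $q$ matching the relevant specialization of the claimed formula~(\ref{qmagnet2}).

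For the induction step, I apply Kuo condensation to the dual graph of $B_{m,a}(x,y,z,t)$ weighted by $wt_3$, choosing the four distinguished vertices $u,v,w,s$ in essentially the same cyclic positions used in the proof of Proposition~\ref{hexprop4}. Removing the corresponding shaded unit triangles again forces certain lozenges in each of the five Kuo terms, but now the products of their $wt_3$-weights must be recomputed. Each of the five resulting regions is a magnet bar with strictly smaller $y+z+t$, so the induction hypothesis supplies their generating functions via~(\ref{qmagnet2}). Substituting these expressions into Kuo's identity~(\ref{kuoeq}) and solving for $\M_3\bigl(B_{m,a}(x,y,z,t)\bigr)$ should then produce the right-hand side of~(\ref{qmagnet2}).

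The main obstacle is the bookkeeping of $q$-exponents under $wt_3$. In Proposition~\ref{hexprop4} the weighted lozenges that got forced were right lozenges running along the top and the base, and their contributions collapsed into clean $\binom{k+1}{2}$-type sums. Under $wt_3$, the weighted (vertical) lozenges instead lie along the SW direction, so the relevant distances become different linear combinations of $x,y,z,t,m,a$, and both the base-case prefactors and the forced-lozenge weights in the Kuo recurrence take a structurally different form from those in the proof of Proposition~\ref{hexprop4}. Once the recurrence is pinned down, the final step is to verify that the right-hand side of~(\ref{qmagnet2}) satisfies it: after normalizing both sides by the common $q$-hyperfactorial product and by the prefactor $q^{m\binom{a+1}{2}+t\binom{z+a+1}{2}+a(z+m)(x+a)+a\binom{z+m+1}{2}}$, the remaining identity reduces to a polynomial identity in $[n]_q$, which can be handled with the shift rule $\Hf_q(n+1)=[n]_q!\,\Hf_q(n)$ and routine cancellations.
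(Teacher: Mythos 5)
Your overall strategy is the same as the paper's (induction on $y+z+t$, Kuo condensation with the same four vertices, and verification that the product formula satisfies the resulting recurrence), but your base-case list is copied from Proposition \ref{hexprop4}, and that is where a genuine gap appears: under $wt_3$ the case $m=0$ is not a base case you can actually settle with the tools available. When $m=0$ the magnet bar \emph{is} the region $K_a(x,y,z,t)$, but weighted on its \emph{vertical} lozenges, and no rotation/reflection of the triangular lattice simultaneously keeps the triangular dent on the base (as Lemma \ref{hexprop3b} requires) and converts the vertical-lozenge weighting into $wt_2$ or $wt_1$; the equivalence ``$wt_3$ equals $wt_2$ on a rotated copy with permuted sides'' is valid only for honest hexagons (this is exactly how the $a=0$ case is handled), not for $K$-type regions. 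Your fallback --- proving a $wt_3$-analog of Corollary \ref{hexprop1} and of Lemma \ref{hexprop3b} via ``the third family of lozenge-paths'' --- is not justified as stated: the constant-ratio argument in the proof of Proposition \ref{hexprop2} hinges on every lozenge-path in a box carrying a \emph{fixed} number of weighted lozenges, a rigidity that is lost in a dented (semi)hexagon, so such an analog would be a genuinely new statement requiring its own proof and is nowhere in the paper.

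The paper avoids this entirely by changing the base-case list: it uses $a=0$, $x=0$, $y=0$, $z=0$, $t=0$ (replacing $m=0$ by $x=0$). This is legitimate because in the induction step one may then assume $a,x,y,z,t\geq 1$, and the Kuo vertices remain well defined since the north side has length $x+y+m\geq 2$ and the northeast side has length $t+a\geq 2$. Each of these base cases reduces, after stripping forced lozenges and rotating/reflecting, to evaluations already in hand: $a=0$ gives a $wt_2$-weighted hexagon; $x=0$ gives, up to an explicit power of $q$, the $wt_2$-weighted region $K_m(a,t,z,y)$; $z=0$ and $t=0$ split via Lemma \ref{GS} into two hexagons; and $y=0$ gives a $K_m(a,z,t,x)$ whose induced weight is $wt_1$ with $q$ replaced by $q^{-1}$, handled through the identity $[n]_{q^{-1}}=[n]_q/q^{\,n-1}$ --- a wrinkle your generic ``recompute the $q$-exponent bookkeeping'' step does not anticipate. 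With the base cases swapped and these reductions spelled out, your outline coincides with the paper's proof; as written, the $m=0$ case is a hole.
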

\begin{proof}
The equality (\ref{qmagnet2}) can be treated similarly to (\ref{qmagnet}) in the Proposition \ref{hexprop4} by induction on $y+z+t$.

We would also like to obtain a recurrence for the $\M_3$-generating function of the magnet bar by using Kuo consdensation. Similar to Proposition \ref{hexprop4}, the application of Kuo condensation requires that the north and the northeast sides both have lengths greater than $1$.  Taking into account also the base cases of the recurrence, we need to verify (\ref{qmagnet2}) in the situations when $a=0$, $x=0$, $y=0$, $z=0$ or $t=0$.

Assume that our magnet bar region is weighted by $wt_3$. We still follow the process in Figures \ref{basecase2} and \ref{kuomagnet}, however, the reader should be aware that the weight assignment here is \emph{different} from that in the proof of Proposition \ref{hexprop4}.

If $a=0$, we get a weighted version of the hexagon $Hex(z,x+y+m,t)$ by removing forced lozenges from the magnet bar region as in Figure \ref{basecase2}(e). Rotating the hexagon  $60^{\circ}$ clockwise and reflecting the resulting region over a vertical line, we get the hexagon $Hex(z,t,x+y+m)$ weighted by $wt_2$, and (\ref{qmagnet2}) follows from Corollary \ref{hexprop1}.

If $x=0$, after removing forced vertical lozenges (whose weight product is equal to $q^{(t+m)\binom{a+1}{2}}$) as in Figure \ref{basecase2}(a), we get a weighted region $R$. Next, we rotate $R$ $60^{\circ}$ counter-clockwise and reflect the resulting region about a vertical line. This way, we get the weighted region $K_{m}(a,t,z,y)$ in which a right lozenge is weighted by $q^{a+l}$, where $l$ is the distance from the top of the lozenge to the bottom of the region. We divide the weight of each right lozenge in the latter region by $q^{a}$, and get back the weight assignment $wt_2$. Thus,
\begin{equation}
\M_3\big(B_{m,a}(0,y,z,t)\big)=q^{(t+m)\binom{a+1}{2}}q^{azt+a^2(z+m)}\M_2\big(K_{m}(a,t,z,y)\big),
\end{equation}
where the factor $q^{azt+a^2(z+m)}$ comes from the weight division. Then (\ref{qmagnet2}) follows from Lemma \ref{hexprop3b}.

If $y=0$, by removing forced lozenges (whose weight product is $q^{(x+a)(z+m)a+a\binom{z+m+1}{2}}$) and rotating the resulting region $60^{\circ}$ clockwise, we get a weighted version of region $K_{m}(a,z,t,x)$ in which a right lozenge is weighted by $q^{m+a+x+z+1-l}$, where $l$ is the distance from the left side of the lozenge and the southeast side of the region (see Figure \ref{basecase2}(b)). By dividing the weight of each right lozenge by $q^{m+a+x+z+1}$, we get back the weight assignment $wt_1$, \emph{where $q$ is replaced by $q^{-1}$}. Thus, (\ref{qmagnet2}) follows from Proposition \ref{hexprop2}, Lemma \ref{hexprop3b} and the simple fact $[n]_{q^{-1}}=[n]_q/q^{n-1}$.

 If $z=0$, we apply Region-splitting Lemma \ref{GS} (and remove forced lozenges weighted by $1$) to split our region into two hexagons as in Figure \ref{basecase2}(c). Next, we rotate the right hexagon $60^{\circ}$ counter-clockwise  and reflect the resulting hexagon about a vertical line to get the hexagon $Hex(a,t+m,x)$ weighted by $wt_2$. For the left hexagon, we also rotate it $60^{\circ}$ counter-clockwise, reflect the resulting region about a vertical line and divide the weight of each right lozenge of it by $q^{x+a}$ to get the hexagon $Hex(m,a,y)$ weighted by $wt_2$. Then we get (\ref{qmagnet2}) from Corollary \ref{hexprop1}. The case when $t=0$, can be treated similarly to the case when $z=0$, based on Figure \ref{basecase2}(d).

\medskip

The induction step is completely analogous to that of the proof of Proposition \ref{qmagnet}. We also apply Kuo's Theorem \ref{kuothm}, based on Figure \ref{kuomagnet}. The vertices $u,v,w,s$ are well-defined because the north side of the magnet bar has length $x+y+m\geq x+y\geq 2$ and the northeast side has length $t+a\geq 2$.  After removing lozenges forced by the shaded unit triangles, we get back new $B$-type regions weighted by $wt_3$.  Figure \ref{kuomagnet} tells us that the product of $\M_3$-generating functions of the two regions on the top is equal to the product of the $\M_3$-generating functions of the two regions in the middle plus the product of $\M_3$-generating functions of the two regions on the bottom. To be precise, we get the following recurrence
\begin{align}
\M_3\big(B_{m,a}(x,y,z,t)&\big)\M_3\big(B_{m,a}(x,y-1,z,t-1)\big)=\notag\\
&\M_3\big(B_{m,a}(x,y-1,z,t)\big)\M_3\big(B_{m,a}(x,y,z,t-1)\big)\notag\\
&+q^{m+a+x+y+z}\M_3\big(B_{m,a}(x,y-1,z+1,t-1)\big)\M_3\big(B_{m,a}(x,y,z-1,t)\big),
\end{align}
and the proposition follows from the induction hypothesis.
\end{proof}

\begin{rmk} In some sense, Proposition \ref{hexprop5} is equivalent to Proposition \ref{hexprop4} in the same way as the three weightings in Proposition \ref{hexprop2} are equivalent.
Indeed, we first introduce an analog  $wt'$ of the natural weight assignment $wt_0$, by viewing each vertical lozenge in a tiling $T$ as the right face of a horizontal block running from left to right in the pile $\pi_T$ corresponding to $T$. Each vertical lozenge is now weighted by $q^x$, where $x$ is the length of its corresponding block. Using the same arguments as used when comparing $wt_2$ and $wt_0$ in Proposition \ref{hexprop2}, one can show that $wt_3$ and $wt'$ are only different by some power of $q$. Moreover, we note that $wt'(T)=wt_0(T)=q^{|\pi_T|}$, for any tiling $T$ of the region. This implies that $wt_2$ and $wt_3$ are only different by some multiplicative factor, and Proposition \ref{hexprop5} follows from Proposition \ref{hexprop4} (and vice versa).
\end{rmk}

\section{Proof of Theorem \ref{qmain}}

By Proposition \ref{hexprop2}, we only need to show that
  \begin{align}\label{qmaineq}
 \M_2&\left(Q\begin{pmatrix}x&y&z&t\\m&a&b&c\end{pmatrix}\right)=\notag\\
&q^{\textbf{g}\begin{pmatrix}x&y&z&t\\m&a&b&c\end{pmatrix}} \frac{\Hf_q(m+a+b+c+x+y+z+t)}{\Hf_q(m+a+b+c+x+y+t)\Hf_q(m+a+b+c+x+y+z)}\notag\\
 &\times\frac{\Hf_q(m+a+b+c+x+t)\Hf_q(m+a+b+c+x+y)\Hf_q(m+a+b+c+y+z)}{\Hf_q(m+a+b+c+z+t)\Hf_q(m+a+b+c+x)
 \Hf_q(m+a+b+c+y)}\notag\\
 &\times \frac{\Hf_q(x)\Hf_q(y)\Hf_q(z)\Hf_q(t)}{\Hf_q(x+t)\Hf_q(y+z)}\frac{\Hf_q(m)^3\Hf_q(a)^2\Hf_q(b)\Hf_q(c)\Hf_q(m+a+b+c)}
 {\Hf_q(m+a)^2\Hf_q(m+b)\Hf_q(m+c)}\notag\\
 &\times\frac{\Hf_q(m+b+c+z+t)\Hf_q(m+a+c+x)\Hf_q(m+a+b+y)}{\Hf_q(m+b+y+z)\Hf_q(m+c+x+t)}\notag\\
  &\times\frac{\Hf_q(c+x+t)\Hf_q(b+y+z)}{\Hf_q(a+c+x)\Hf_q(a+b+y)\Hf_q(b+c+z+t)}.
 \end{align}

\begin{figure}\centering
\includegraphics[width=14cm]{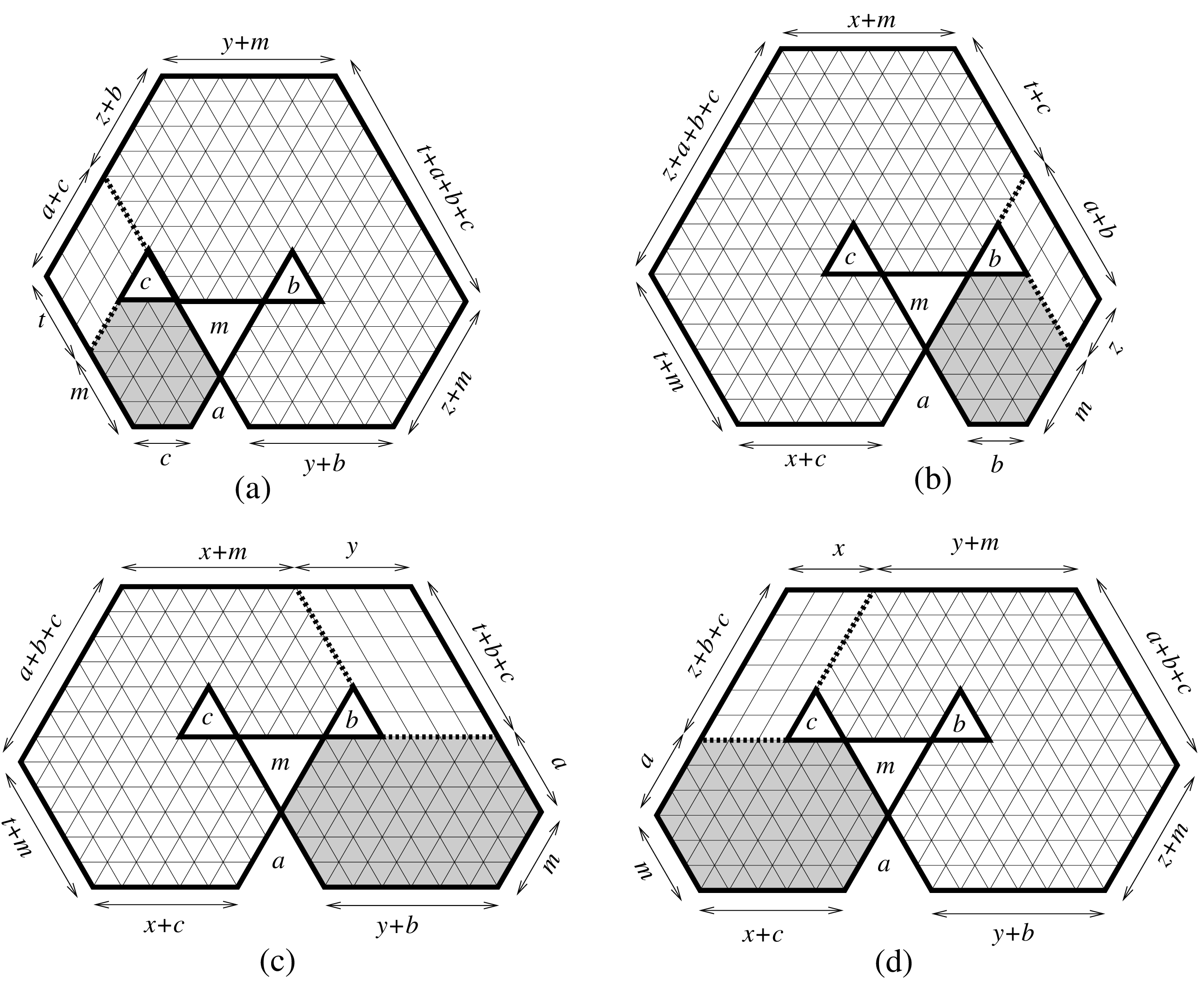}
\caption{The four base cases: (a) $x=0$, (b) $y=0$, (c) $z=0$ (b), and $t=0$.}
\label{basecase}
\end{figure}

\normalsize We prove (\ref{qmaineq}) by induction on $y+z+t$.  We assume that our region $Q:=Q\begin{pmatrix}x&y&z&t\\m&a&b&c\end{pmatrix}$ is weighted by $wt_2$

Similarly to the case of the magnet bars in Propositions \ref{hexprop4} and \ref{hexprop5}, we will use Kuo's condensation Theorem \ref{kuothm} to obtain a recurrence for the $\M_2$-generating function of the $Q$-type region. In order to apply the theorem, certain side-lengths of the region $Q$ must be large enough. In particular, this requires $x+y+m\geq 2$ and $t+a+b+c\geq 2$. Taking into account also the base cases of the recurrence, we need to verify  (\ref{qmaineq}) in the cases when one of the four parameters $x,y,z,t$ equals $0$.

If $x=0$, by applying Region-splitting Lemma \ref{GS},  we split $Q$ into two parts as in Figure \ref{basecase}(a): the shaded hexagon $Hex(a,c,m)$ (weighted by $wt_2$) and $Q - Hex(a,c,m)$. After removing forced vertical lozenges (which have the weight 1) from the latter region, we get a weighted magnet bar region (rotated $60^{\circ}$). Rotating the magnet bar region $60^{\circ}$ counter-clockwise and reflecting the resulting region about a vertical line, we get the magnet bar region $B_{b,m}(a,t+c,z,y)$ weighted by $wt_3$.
Thus, we have
\begin{align}
\M_2\left(Q\begin{pmatrix}0&y&z&t\\m&a&b&c\end{pmatrix}\right)&=\M_2\big(Hex(a,c,m)\big)\M_3\left(Q\begin{pmatrix}0&y&z&t\\m&a&b&c\end{pmatrix}-Hex(a,c,m)\right)\notag\\
&=\M_2\big(Hex(a,c,m)\big)\M_3\big(B_{b,m}(a,t+c,z,y)\big),
\end{align}
and (\ref{qmaineq}) follows from Corollary \ref{hexprop1} and Proposition \ref{hexprop5}. The case $t=0$ can be treated similarly to the case $x=0$, based on Figure \ref{basecase}(d). The only difference is that our forced right lozenges have weight product equal to $q^{x(a+m)(z+b+c)+x\binom{z+b+c+1}{2}}$. Thus, we get
\begin{align}
\M_2&\left(Q\begin{pmatrix}x&y&z&0\\m&a&b&c\end{pmatrix}\right)\notag\\
&=\M_2\big(Hex(a,x+c,m)\big)\M_2\left(Q\begin{pmatrix}x&y&z&0\\m&a&b&c\end{pmatrix}-Hex(a,x+c,m)\right)\notag\\
&=\M_2\big(Hex(a,x+c,m)\big)q^{x(a+m)(z+b+c)+x\binom{z+b+c+1}{2}} \M_3\big(B_{b,m}(a,c,z,y)\big).
\end{align}
Again, (\ref{qmaineq}) follows from Corollary \ref{hexprop1} and Proposition \ref{hexprop5}.

If $y=0$,  by Region-splitting Lemma \ref{GS}, we get
\begin{equation}
\M_2\left(Q\begin{pmatrix}x&0&z&t\\m&a&b&c\end{pmatrix}\right)=\M_2\big(Hex(m,b,a)\big)\M_2\left(Q\begin{pmatrix}x&0&z&t\\m&a&b&c\end{pmatrix}-Hex(m,b,a)\right)
\end{equation}
(see Figure \ref{basecase}(b)).
We also remove forced lozenges (having weight 1) from the second region on the right-hand side  to get a region $R'$. Next, we rotate $R'$ $60^{\circ}$ clockwise and reflect it about a vertical line to get the magnet bar $B_{c,m}(b+z,a,x,t)$ weighted by $wt_1$. Thus, we have
\begin{equation}
\M_2\left(Q\begin{pmatrix}x&0&z&t\\m&a&b&c\end{pmatrix}\right)=\M_2\big(Hex(m,b,a)\big)\M_1\big(B_{c,m}(b+z,a,x,t)\big),
\end{equation}
and (\ref{qmaineq}) follows from Corollary \ref{hexprop1} and Propositions \ref{hexprop2} and \ref{hexprop4}. The case $z=0$ can be obtained in the same way, based on Figure \ref{basecase}(c).

\begin{figure}\centering
\includegraphics[width=14cm]{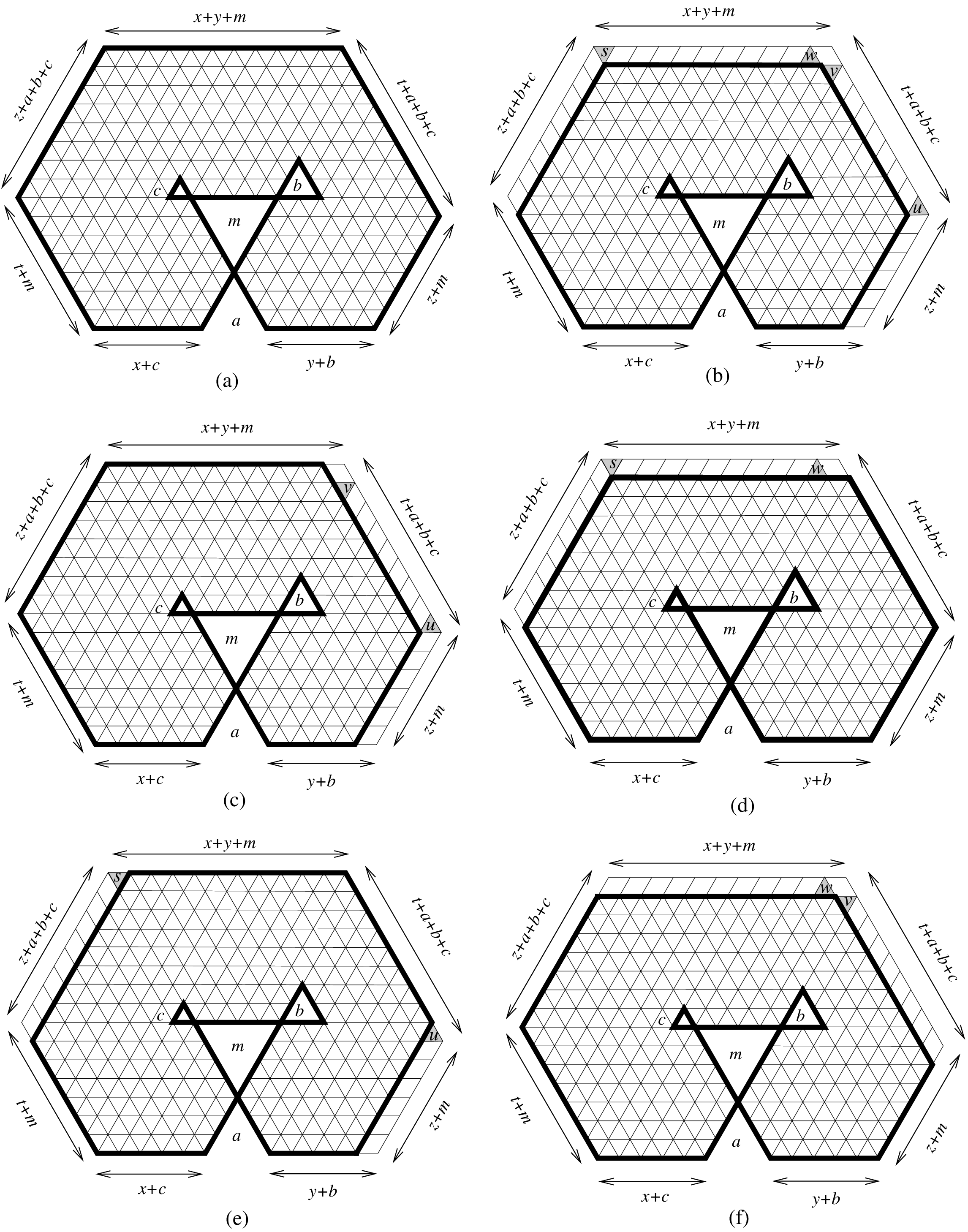}
\caption{Obtaining the recurrence on the numbers of tilings by using Kuo's condensation.}
\label{kuoshamrock}
\end{figure}

\medskip

For the induction step, we assume that $x,y,z,t$ are positive and that (\ref{qmaineq}) holds for any  $Q$-type regions in which the sum of the $y$-, $z$- and $t$-parameters is strictly less than $y+z+t$.

If $b=c=0$, then (\ref{qmaineq}) follows directly from Proposition \ref{hexprop4}. Therefore, we can assume, from now on, that $b+c\geq1$.

We now apply Kuo condensation to the dual graph $G$ of the region $Q\begin{pmatrix}x&y&z&t\\m&a&b&c\end{pmatrix}$ weighted by $wt_2$. The four vertices $u,v,w,s$ correspond to the four shaded unit triangles in Figure \ref{kuoshamrock}(b). We notice that the north side of the region has length $x+y+m\geq x+y\geq 2$ and the northeast side has length $t+a+b+c\geq t+b+c\geq 2$, so the four vertices $u,,v,w,s$ are well-defined. By collecting the weights of the forced lozenges shown in Figures \ref{kuoshamrock}(b)--(f), we get respectively
\begin{equation}\label{qmaineq5}
\M(G-\{u,v,w,s\})=q^{\binom{z+m+1}{2}+(x+y+m-2)(z+t+m+a+b+c)}\M_2\left(Q\begin{pmatrix}x&y-1&z&t-1\\m&a&b&c\end{pmatrix}\right),
\end{equation}
\begin{equation}\label{qmaineq1}
\M(G-\{u,v\})=q^{\binom{z+m+1}{2}}\M_2\left(Q\begin{pmatrix}x&y-1&z&t\\m&a&b&c\end{pmatrix}\right),
\end{equation}
\begin{equation}\label{qmaineq2}
\M(G-\{w,s\})=q^{(x+y+m-2)(z+t+m+a+b+c)}\M_2\left(Q\begin{pmatrix}x&y&z&t-1\\m&a&b&c\end{pmatrix}\right),
\end{equation}
\begin{equation}\label{qmaineq3}
\M(G-\{u,s\})=q^{\binom{z+m+1}{2}}\M_2\left(Q\begin{pmatrix}x&y-1&z+1&t-1\\m&a&b&c\end{pmatrix}\right) ,
\end{equation}
and
\begin{equation}\label{qmaineq4}
\M(G-\{v,w\})=q^{(x+y+m-1)(z+t+m+a+b+c)}\M_2\left(Q\begin{pmatrix}x&y&z-1&t\\m&a&b&c\end{pmatrix}\right).
\end{equation}
Substituting (\ref{qmaineq5})--(\ref{qmaineq4}) into equation (\ref{kuoeq}) in Kuo's Theorem \ref{kuothm}, we get
\small{\begin{align}\label{qmaineq6}
\M_2&\left(Q\begin{pmatrix}x&y&z&t\\m&a&b&c\end{pmatrix}\right)\M_2\left(Q\begin{pmatrix}x&y-1&z&t-1\\m&a&b&c\end{pmatrix}\right)\notag\\
&=\M_2\left(Q\begin{pmatrix}x&y-1&z&t\\m&a&b&c\end{pmatrix}\right)\M_2\left(Q\begin{pmatrix}x&y&z&t-1\\m&a&b&c\end{pmatrix}\right)\notag\\
&+q^{z+t+m+a+b+c}\M_2\left(Q\begin{pmatrix}x&y-1&z+1&t-1\\m&a&b&c\end{pmatrix}\right)\M_2\left(Q\begin{pmatrix}x&y&z-1&t\\m&a&b&c\end{pmatrix}\right).
\end{align}}

\normalsize Finally, if we denote by $\Psi\begin{pmatrix}x&y&z&t\\m&a&b&c\end{pmatrix}$ the expression on the right-hand side of (\ref{qmaineq}), we only need to show that $\Psi$ also satisfies the recurrence (\ref{qmaineq6}). Equivalently, we need to verify that
 \small{\begin{align}\label{qcheckeq1}
 \frac{\Psi\begin{pmatrix}x&y-1&z&t\\m&a&b&c\end{pmatrix}}{\Psi\begin{pmatrix}x&y&z&t\\m&a&b&c\end{pmatrix}}
 &\frac{\Psi\begin{pmatrix}x&y&z&t-1\\m&a&b&c\end{pmatrix}}{\Psi\begin{pmatrix}x&y-1&z&t-1\\m&a&b&c\end{pmatrix}}+\notag\\
 &q^{z+t+m+a+b+c}\frac{\Psi\begin{pmatrix}x&y&z-1&t\\m&a&b&c\end{pmatrix}}{\Psi\begin{pmatrix}x&y&z&t\\m&a&b&c\end{pmatrix}}
 \frac{\Psi\begin{pmatrix}x&y-1&z+1&t-1\\m&a&b&c\end{pmatrix}}{\Psi\begin{pmatrix}x&y-1&z&t-1\\m&a&b&c\end{pmatrix}}=1.
 \end{align}}
\normalsize Let $\Phi\begin{pmatrix}x&y&z&t\\m&a&b&c\end{pmatrix}:=q^{-\textbf{g}\begin{pmatrix}x&y&z&t\\m&a&b&c\end{pmatrix}}\Psi\begin{pmatrix}x&y&z&t\\m&a&b&c\end{pmatrix}$. We notice that the function $\Phi\begin{pmatrix}x&y&z&t\\m&a&b&c\end{pmatrix}$ is simply the expression on the right-hand side of (\ref{mainequation}). By the definition of the function $\textbf{g}$, we get
\small{\begin{equation}
\textbf{g}\begin{pmatrix}x&y-1&z&t\\m&a&b&c\end{pmatrix}+\textbf{g}\begin{pmatrix}x&y&z&t-1\\m&a&b&c\end{pmatrix}=\textbf{g}\begin{pmatrix}x&y&z&t\\m&a&b&c\end{pmatrix}+\textbf{g}\begin{pmatrix}x&y-1&z&t-1\\m&a&b&c\end{pmatrix}
\end{equation}}
\normalsize and
\small{\begin{align}
\textbf{g}\begin{pmatrix}x&y&z-1&t\\m&a&b&c\end{pmatrix}+&\textbf{g}\begin{pmatrix}x&y-1&z+1&t-1\\m&a&b&c\end{pmatrix}\notag\\&=(x+y-z-1)+\textbf{g}\begin{pmatrix}x&y&z&t\\m&a&b&c\end{pmatrix}+\textbf{g}\begin{pmatrix}x&y-1&z&t-1\\m&a&b&c\end{pmatrix}.
\end{align}}
\normalsize Therefore,  (\ref{qcheckeq1}) is equivalent to
 \small{\begin{align}\label{qcheckeq2}
 \frac{\Phi\begin{pmatrix}x&y-1&z&t\\m&a&b&c\end{pmatrix}}{\Phi\begin{pmatrix}x&y&z&t\\m&a&b&c\end{pmatrix}}
& \frac{\Phi\begin{pmatrix}x&y&z&t-1\\m&a&b&c\end{pmatrix}}{\Phi\begin{pmatrix}x&y-1&z&t-1\\m&a&b&c\end{pmatrix}}+\notag\\
& q^{m+a+b+c+x+y+t-1}\frac{\Phi\begin{pmatrix}x&y&z-1&t\\m&a&b&c\end{pmatrix}}{\Phi\begin{pmatrix}x&y&z&t\\m&a&b&c\end{pmatrix}}
 \frac{\Phi\begin{pmatrix}x&y-1&z+1&t-1\\m&a&b&c\end{pmatrix}}{\Phi\begin{pmatrix}x&y-1&z&t-1\\m&a&b&c\end{pmatrix}}=1.
 \end{align}}

\normalsize Let us simplify the first term on the left-hand side of (\ref{qcheckeq2}). We notice that the two $\Phi$-functions in the numerator and denominator of the first fraction in the first term are different only at  their $y$-parameters. Canceling out all terms, which have no $y$-parameter, and using the trivial fact $\Hf_q(n+1)/\Hf_q(n)=[n]_q!$,  we get
\small{\begin{align}
&\frac{\Phi\begin{pmatrix}x&y-1&z&t\\m&a&b&c\end{pmatrix}}{\Phi\begin{pmatrix}x&y&z&t\\m&a&b&c\end{pmatrix}}=\frac{[y+z-1]_q![a+b+y-1]_q![m+b+y+z-1]_q!}{[y-1]_q![b+y+z-1]_q![m+a+b+y-1]_q!}\notag\\
&\times \frac{[m+a+b+c+y-1]_q![m+a+b+c+x+y+t-1]_q![m+a+b+c+x+y+z-1]_q!}{[m+a+b+c+x+y-1]_q![m+a+b+c+y+z-1]_q![m+a+b+c+x+y+z+t-1]_q!}.
\end{align}}
\normalsize Doing similarly for the second fraction of the first term, we obtain
\small{\begin{align}
& \frac{\Phi\begin{pmatrix}x&y&z&t-1\\m&a&b&c\end{pmatrix}}{\Phi\begin{pmatrix}x&y-1&z&t-1\\m&a&b&c\end{pmatrix}}=\frac{[y-1]_q![b+y+z-1]_q![m+a+b+y-1]_q!}{[y+z-1]_q![a+b+y-1]_q![m+b+y+z-1]_q!}\notag\\
&\times \frac{[m+a+b+c+x+y-1]_q![m+a+b+c+y+z-1]_q![m+a+b+c+x+y+z+t-2]_q!}{[m+a+b+c+y-1]_q![m+a+b+c+x+y+t-2]_q![m+a+b+c+x+y+z-1]_q!}.
\end{align}}
\normalsize Thus, the first term on the left-hand side of (\ref{qcheckeq2}) can be simplified as
\small{\begin{equation}\label{qcheckeq3}
 \frac{\Phi\begin{pmatrix}x&y-1&z&t\\m&a&b&c\end{pmatrix}}{\Phi\begin{pmatrix}x&y&z&t\\m&a&b&c\end{pmatrix}}
 \frac{\Phi\begin{pmatrix}x&y&z&t-1\\m&a&b&c\end{pmatrix}}{\Phi\begin{pmatrix}x&y-1&z&t-1\\m&a&b&c\end{pmatrix}}=\frac{[m+a+b+c+x+y+t-1]_q}{[m+a+b+c+x+y+z+t-1]_q}.
\end{equation}}
\normalsize We simplify the second term on the left-hand side of (\ref{qcheckeq2}) in the same way (the numerator and the denominator in each fraction are now different at their $z$-parameters). We get
\small{\begin{equation}\label{qcheckeq4}
 \frac{\Phi\begin{pmatrix}x&y&z-1&t\\m&a&b&c\end{pmatrix}}{\Phi\begin{pmatrix}x&y&z&t\\m&a&b&c\end{pmatrix}}
 \frac{\Phi\begin{pmatrix}x&y-1&z+1&t-1\\m&a&b&c\end{pmatrix}}{\Phi\begin{pmatrix}x&y-1&z&t-1\\m&a&b&c\end{pmatrix}}=\frac{[z]_q}{[m+a+b+c+x+y+z+t-1]_q}.
\end{equation}}
\normalsize By (\ref{qcheckeq3}) and (\ref{qcheckeq4}), the equality (\ref{qcheckeq2}) becomes the following identity
\small{\begin{equation}
\frac{[m+a+b+c+x+y+t-1]_q+q^{m+a+b+c+x+y+t-1}[z]_q}{[m+a+b+c+x+y+z+t-1]_q}=1,
\end{equation}}
\normalsize which follows directly from the definition of the $q$-integers. This completes our proof.

\subsection*{Acknowledgements}
I would like to thank the two anonymous reviewers for their helpful suggestions and comments. The current proof of Lemma \ref{GS} was provided by one of the reviewers.

I also thank  Mihai Ciucu for fruitful discussions, and David Wilson for showing me the $q$-mode of his  software \texttt{vacmax 1.6e} (available for downloading at David Wilson's website, \texttt{http://dbwilson.com/vaxmacs/}), which is really helpful in verifying the formulas in the paper.

 This research was supported in part by the Institute for Mathematics and its Applications with funds provided by the National Science Foundation (grant no. DMS-0931945).


\end{document}